\documentclass[10pt]{amsart}

\usepackage[T1]{fontenc}
\usepackage{amsmath,amssymb,mathtools}
\usepackage{array,booktabs}
\usepackage{capt-of}
\usepackage{hyperref}

\hypersetup{hidelinks}
\allowdisplaybreaks
\numberwithin{equation}{section}

\newtheorem{theorem}{Theorem}[section]
\newtheorem{proposition}[theorem]{Proposition}
\newtheorem{lemma}[theorem]{Lemma}
\newtheorem{corollary}[theorem]{Corollary}

\title[Rank-two commutators in $M_6(\mathbb C)$]
{The $6\times6$ equality case of matrix spaces with rank-two commutators}
\author[Zhi-Lin Zhang]{Zhi-Lin Zhang}
\address{Independent Researcher, Taipei, Taiwan}

\email{hsa00000@gmail.com}
\date{}
\subjclass[2020]{Primary 15A30; Secondary 15A27, 14M15, 14L30}
\keywords{linear spaces of matrices, rank-two commutators, Grassmannian,
flag variety, conjugacy orbit, infinitesimal rigidity, Zariski tangent space}

\begin{document}

\begin{abstract}
Let $\mathcal V\subseteq M_6(\mathbb C)$ be a $17$-dimensional linear subspace such that
\[
\operatorname{rank}[S,T]\leq2
\quad(S,T\in\mathcal V).
\]
We prove that $\mathcal V$, or its transpose, is conjugate to the algebra
\[
\left\{
\begin{pmatrix}
A&B&C\\
0&\lambda I_2&D\\
0&0&\lambda I_2
\end{pmatrix}:
A,B,C,D\in M_2(\mathbb C),\ \lambda\in\mathbb C
\right\}.
\]
Consequently, the corresponding closed algebraic locus in
$\operatorname{Gr}(17,M_6(\mathbb C))$ is the disjoint union of two
nonsingular irreducible components, each isomorphic to
$\operatorname{Fl}(2,4;6)$.  We also prove that the Zariski tangent space
at $\mathcal A$ of the corresponding closed algebraic locus is equal to the
tangent space to the conjugacy orbit of $\mathcal A$.
\end{abstract}

\maketitle

\section{Introduction}

The case $k=0$ is classical.  Schur determined the maximal dimension of a
commutative subspace of $M_n(\mathbb C)$ and the extremal spaces
\cite{Schur}; see also Jacobson's later treatment \cite{Jacobson}.
Omladi\v{c}, Radjavi, and \v{S}ivic proved \cite[Theorem~12]{ORS} that a
linear subspace $\mathcal V\subseteq M_n(\mathbb C)$ satisfying
\[
\operatorname{rank}[S,T]\leq k
\quad(S,T\in\mathcal V)
\]
has dimension at most
\[
nk+\left\lfloor\frac{(n-k)^2}{4}\right\rfloor+1.
\]
They conjectured a classification of the equality cases
\cite[Conjecture~5]{ORS}.  They proved this equality-case classification for
$k=1$ and $k=n-1$, and for arbitrary $k$ under the additional assumption that
$\mathcal V$ is an algebra \cite[Theorems~2, 4, and~13]{ORS}.  The main result
of this paper proves the equality-case classification in their conjecture for
\[
(n,k)=(6,2),
\]
for which the dimension bound is $17$.

Define
\begin{equation}\label{eq:A-definition}
\mathcal A=
\left\{
S(A,B,C,D,\lambda)
=
\begin{pmatrix}
A&B&C\\
0&\lambda I_2&D\\
0&0&\lambda I_2
\end{pmatrix}:
A,B,C,D\in M_2(\mathbb C),\ \lambda\in\mathbb C
\right\}.
\end{equation}
Then $\dim_{\mathbb C}\mathcal A=17$.  For a subspace
$\mathcal V\subseteq M_6(\mathbb C)$, write
\[
\mathcal V^{\mathsf T}
=
\{S^{\mathsf T}:S\in\mathcal V\}.
\]

\begin{theorem}\label{thm:main}
Let $\mathcal V\subseteq M_6(\mathbb C)$ be a $17$-dimensional complex linear
subspace satisfying
\[
\operatorname{rank}[S,T]\leq2
\quad(S,T\in\mathcal V).
\]
There exists $P\in\operatorname{GL}_6(\mathbb C)$ such that either
\[
P^{-1}\mathcal VP=\mathcal A
\quad\text{or}\quad
P^{-1}\mathcal V^{\mathsf T}P=\mathcal A.
\]
\end{theorem}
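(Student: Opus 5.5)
We would argue by reducing to the algebra case of \cite[Theorem~13]{ORS}. There the equality case is classified under the extra hypothesis that $\mathcal V$ is an algebra, and for $(n,k)=(6,2)$ the extremal algebras are exactly the conjugates of $\mathcal A$ and $\mathcal A^{\mathsf T}$. So the whole task is to show that a $17$-dimensional subspace $\mathcal V$ with $\operatorname{rank}[S,T]\le 2$ contains $I$ and is closed under multiplication.

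The first step is to show $I\in\mathcal V$. The space $\mathcal V+\mathbb C I$ still satisfies the commutator condition, because adding scalars does not change commutators. By the dimension bound it therefore has dimension at most $17$, which forces $I\in\mathcal V$.

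The second step is to obtain a common invariant flag by induction on $n$ using the bound of \cite[Theorem~12]{ORS}. We pick $S\in\mathcal V$ generic. The map $\operatorname{ad}_S:\mathcal V\to M_6$ has image consisting of matrices of rank at most $2$. The image $\operatorname{ad}_S(\mathcal V)$ is a linear space of matrices of rank at most $2$, so by the Flanders/Atkinson--Lloyd classification it is compression-type. That means it is contained in $\{X: X U\subseteq W\}$ with $\dim W+\operatorname{codim}U\le 2$, or in the analogous transposed form. We would use this to produce a nontrivial subspace $W$, of dimension $2$ or codimension $2$, that is invariant under all of $\mathcal V$. After possibly transposing, which is where the alternative $\mathcal V^{\mathsf T}$ enters, we may assume $\mathcal V$ stabilizes a $2$- or $4$-dimensional subspace. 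We then compress $\mathcal V$ to the diagonal blocks and the quotient. The quotient compressions satisfy a rank bound on commutators in a smaller size, for instance commutators of rank $\le 2$ or $\le 0$ on $4\times 4$ or $2\times 2$ blocks. Counting dimensions against the bounds $nk+\lfloor (n-k)^2/4\rfloor+1$ in the smaller size, together with the kernel of the compression map, forces every inequality to be tight. In particular it forces the block on which $\mathcal V$ acts by scalars to be $\lambda I_2$, with the same $\lambda$ on the two lower blocks, and forces the full off-diagonal blocks $B,C,D$ to be present.

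The main obstacle is the second step: showing that a generic-commutator rank constraint yields a \emph{common} invariant subspace, rather than only a subspace depending on $S$. We would first try to use the compression-type structure for each pair $(S,T)$ and then a semicontinuity/genericity argument, noting that the spaces $W_S$ vary algebraically and must be constant because $\mathcal V$ is maximal. If that argument does not go through, we fall back on a case analysis of the Jordan type of a generic $S\in\mathcal V$, where $\dim\ker\operatorname{ad}_S\ge 17-\dim\operatorname{ad}_S(\mathcal V)$ restricts the centralizer dimension severely. In that analysis we must rule out the possibility that the rank-$2$ space $\operatorname{ad}_S(\mathcal V)$ is of the primitive $3\times3$ skew-symmetric type. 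We expect this to be excluded by the dimension count, since such spaces have dimension only $3$, while $\operatorname{ad}_S(\mathcal V)$ is large. Once the block form is established, closure under multiplication is immediate, and \cite[Theorem~13]{ORS} identifies $\mathcal V$ up to conjugacy with $\mathcal A$ or $\mathcal A^{\mathsf T}$.
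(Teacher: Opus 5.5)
Your first step is correct: $\mathcal V+\mathbb C I$ still satisfies the hypothesis, so the bound $17$ forces $I\in\mathcal V$. But the whole content of the theorem sits in your second step, and there it is only asserted.

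The compression-space structure of $\operatorname{ad}_S(\mathcal V)$ gives you data $(U_S,W_S)$ for a generic $S$ only. You never pin down its type. A large space of rank-$\le 2$ matrices could a priori be of the mixed type $\{X: XU\subseteq W\}$ with $U$ a hyperplane and $\dim W=1$, which is $11$-dimensional, and you would have to exclude it. The sentence ``the $W_S$ vary algebraically and must be constant because $\mathcal V$ is maximal'' is not an argument, since maximality of $\dim\mathcal V$ supplies no such mechanism.

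The most you get cheaply is this. Rank $\le1$ commutators would force $\dim\mathcal V\le 13$, so generic commutators have rank $2$. If the generic type is ``image in a plane'', then $W_S=\operatorname{im}[S,T]=W_T$ for generic pairs, so all commutators land in one plane $W$. Even then you are stuck: knowing every $[S,T]$ has image in $W$ does not make $W$ invariant under $\mathcal V$. Since $\mathcal V$ is not closed under products, $S[T,R]$ is not a commutator of elements of $\mathcal V$ and is not controlled. For $\mathcal V=\mathbb C I+\mathbb C S$, every plane contains all commutator images, invariant or not. So invariance is precisely the nontrivial point and would need the extremality of $\dim\mathcal V$ in an essential way. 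Your exclusion of the primitive $3\times3$ skew-symmetric case also presupposes a lower bound on $\dim\operatorname{ad}_S(\mathcal V)$ that you do not prove.

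The final counting step fails as stated too. Suppose $\mathcal V$ stabilizes a plane. The kernel of the compression to the quotient has dimension at most $12$, and the quotient space obeys the $(n,k)=(4,2)$ bound $10$, for a total of $22$. With a full flag $2\subset4\subset6$ the naive count is the $24$-dimensional block-triangular algebra. None of these inequalities is tight at $17$, so nothing is ``forced''. Getting the same scalar $\lambda I_2$ on both lower blocks, and the full blocks $B,C,D$, requires an actual analysis of how the off-diagonal blocks interact with the diagonal commutators. For instance, you would need to show that the lower $4\times4$ compression is commutative. You do not give this analysis.

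The paper never constructs an invariant flag for an individual $\mathcal V$. It works on the closed locus $\mathfrak X\subseteq\operatorname{Gr}(17,M_6(\mathbb C))$, where each irreducible component is $\operatorname{GL}_6(\mathbb C)$-stable and projective. The Borel fixed-point theorem therefore puts an upper-triangular-stable point on each component, and Theorem~\ref{thm:ORS-special} places that point in $\mathcal O\cup\mathcal O^{\mathsf T}$. The computation $T_{\mathcal A}\mathfrak X=T_{\mathcal A}\mathcal O$ (Theorem~\ref{thm:tangent-space-equality}) then shows these orbit points are nonsingular and lie on a unique component, namely the orbit. In effect, the Borel fixed point supplies the common flag you are missing, but only at one point of each component. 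Infinitesimal rigidity is what propagates the conclusion to all of $\mathfrak X$.
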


The proof has three parts.  First, the conjugacy orbit of $\mathcal A$ is
identified with $\operatorname{Fl}(2,4;6)$.  Second, we compute the Zariski
tangent space at $\mathcal A$ of the closed algebraic set of such subspaces and
show that it is exactly the tangent space to this orbit.  Finally, following the
projective Borel-fixed-point reduction of \cite[Lemmas~7 and~8]{ORS}, we reduce
every irreducible component to the upper-triangular fixed-point classification
in \cite[Proposition~11]{ORS}.  After the global classification is obtained, we
distinguish $\mathcal O$ from $\mathcal O^{\mathsf T}$ by an intrinsic invariant
of the corresponding matrix algebras.

\section{The algebraic locus and the conjugacy orbit}

We first record that $\mathfrak X$ is a projective algebraic subset of the
Grassmannian.  We then determine the conjugacy orbit of $\mathcal A$ and
identify it with a flag variety.

Let
\begin{equation}\label{eq:locus}
\mathfrak X=
\left\{
\mathcal V\in\operatorname{Gr}(17,M_6(\mathbb C)):
\operatorname{rank}[S,T]\leq2
\text{ for all }S,T\in\mathcal V
\right\}.
\end{equation}
Throughout, algebraic sets and Zariski tangent spaces are understood in the
classical sense over $\mathbb C$, using affine charts and vanishing ideals.

\begin{proposition}\label{prop:closed}
The subset $\mathfrak X$ is a closed algebraic subset of
$\operatorname{Gr}(17,M_6(\mathbb C))$ and hence is a projective algebraic
set.
\end{proposition}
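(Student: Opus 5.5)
We work in the standard affine charts of $\operatorname{Gr}(17,M_6(\mathbb C))$ and show that in each chart $\mathfrak X$ is the zero set of finitely many polynomials. Closedness is local, so this suffices.

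Fix an identification $M_6(\mathbb C)\cong\mathbb C^{36}$ with a basis $E_1,\dots,E_{36}$. For each $17$-element subset $I\subseteq\{1,\dots,36\}$, let $U_I$ be the open chart of subspaces $\mathcal V$ that project isomorphically onto $\operatorname{span}\{E_i:i\in I\}$. Every such $\mathcal V$ has a unique basis
\[
S_i(x)=E_i+\sum_{j\notin I}x_{ij}E_j\qquad(i\in I),
\]
and the coordinates $x=(x_{ij})\in\mathbb C^{17\times19}$ identify $U_I$ with affine space. The charts $U_I$ cover the Grassmannian.

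The key reduction is bilinearity. The condition $\operatorname{rank}[S,T]\le 2$ for all $S,T\in\mathcal V$ must be expressed polynomially in $x$, and we cannot simply restrict to basis pairs, because rank is not linear. Instead, write $S=\sum_i s_iS_i(x)$ and $T=\sum_i t_iS_i(x)$ with $s,t\in\mathbb C^{17}$. Then
\[
[S,T]=\sum_{i,k}s_it_k[S_i(x),S_k(x)]
\]
has entries that are polynomials in $(x,s,t)$. The rank condition holds if and only if every $3\times3$ minor $m_\alpha(x,s,t)$ of this matrix vanishes. Each $m_\alpha$ is bihomogeneous of degree $3$ in $s$ and of degree $3$ in $t$, so we expand
\[
m_\alpha(x,s,t)=\sum_{|\beta|=|\gamma|=3}c_{\alpha\beta\gamma}(x)\,s^\beta t^\gamma.
\]
A polynomial in $(s,t)$ vanishes identically on $\mathbb C^{17}\times\mathbb C^{17}$ exactly when all its coefficients vanish. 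Hence
\[
\mathfrak X\cap U_I=\{x:c_{\alpha\beta\gamma}(x)=0\text{ for all }\alpha,\beta,\gamma\},
\]
which is Zariski closed in $U_I$.

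On the overlaps $U_I\cap U_J$, both descriptions define the same set-theoretic subset $\mathfrak X\cap U_I\cap U_J$, because the condition is intrinsic to $\mathcal V$. So the local closed sets glue, and $\mathfrak X$ is closed in the Grassmannian.

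Alternatively, one can argue globally. The incidence set
\[
\{(\mathcal V,[S],[T]):S,T\in\mathcal V,\ \operatorname{rank}[S,T]\le2\}
\]
is closed in the product $\operatorname{Gr}\times\mathbb P^{35}\times\mathbb P^{35}$, since the minors are bihomogeneous in $S$ and $T$. The complement of $\mathfrak X$ is then the projection of an open subset of the closed incidence variety $\{S,T\in\mathcal V\}$, and this projection is open because the map is a projective bundle.

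Finally, the Grassmannian is projective via the Plücker embedding, so a closed subset of it is a projective algebraic set. The only point requiring any care is the universal quantifier over $S,T\in\mathcal V$. The coefficient-extraction step handles it, so I expect no serious obstacle.
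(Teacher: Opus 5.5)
Your proof is correct, but it takes a different route from the paper, which gives no argument of its own: it simply invokes Lemma~7 of Omladi\v{c}--Radjavi--\v{S}ivic with $(n,k,m)=(6,2,17)$. You instead prove this special case from scratch.

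You correctly identify that the only real issue is the universal quantifier over $S,T\in\mathcal V$. In a standard chart $U_I$, write $S=\sum_i s_iS_i(x)$ and $T=\sum_i t_iS_i(x)$. Every $3\times3$ minor of $[S,T]$ is then bihomogeneous of bidegree $(3,3)$ in $(s,t)$, with coefficients polynomial in $x$. So vanishing for all $(s,t)$ is equivalent to the vanishing of finitely many polynomials $c_{\alpha\beta\gamma}(x)$. Locality of closedness and the Plücker embedding then finish the argument. Your second, incidence-variety argument is also valid, since the projection of the $\mathbb P^{16}\times\mathbb P^{16}$-bundle to the Grassmannian is open, but it is redundant.

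The citation buys brevity and keeps the paper inside the ORS framework that it reuses for the Borel fixed-point step. Your version buys independence from the reference and explicit local equations. Those equations work verbatim in any graph chart, in particular in the chart $\Theta\colon\operatorname{Hom}(\mathcal A,\mathcal W)\to\mathcal U_{\mathcal W}$ of Section~3. There they are exactly the minors composed with the maps $\mathcal C_{S,T}$ that the paper linearizes. Because your equations cut out $\mathfrak X$ only set-theoretically, the ideal they generate lies inside the vanishing ideal. They therefore give an upper bound for the Zariski tangent space, which is exactly the direction the paper needs there.
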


\begin{proof}
This is \cite[Lemma~7]{ORS} with $(n,k,m)=(6,2,17)$.
\end{proof}

We next determine the geometry of the orbit of $\mathcal A$.
Let $\operatorname{GL}_6(\mathbb C)$ act by conjugation.  With respect to the
standard basis $e_1,\ldots,e_6$, set
\[
E_1=\operatorname{span}(e_1,e_2),
\quad
E_2=\operatorname{span}(e_3,e_4),
\quad
E_3=\operatorname{span}(e_5,e_6).
\]
Thus $\mathbb C^6=E_1\oplus E_2\oplus E_3$.  Define
\[
\mathcal O=\operatorname{GL}_6(\mathbb C)\cdot\mathcal A,
\quad
\mathcal O^{\mathsf T}=\operatorname{GL}_6(\mathbb C)\cdot\mathcal A^{\mathsf T}.
\]

\begin{proposition}\label{prop:orbit}
The algebra $\mathcal A$ belongs to $\mathfrak X$, and its conjugation normalizer
is
\[
N_{\operatorname{GL}_6(\mathbb C)}(\mathcal A)
=
\operatorname{Stab}_{\operatorname{GL}_6(\mathbb C)}(E_1\subsetneq E_1\oplus E_2).
\]
Consequently,
\[
\mathcal O\cong\operatorname{Fl}(2,4;6),
\quad
\dim\mathcal O=12.
\]
In particular, $\mathcal O$ is nonsingular, projective, irreducible, and closed
in the Grassmannian.
\end{proposition}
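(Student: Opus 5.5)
The plan is to verify the rank condition directly, to describe $\mathcal A$ intrinsically in terms of the flag $E_1\subsetneq E_1\oplus E_2$, and then to read off the orbit from the normalizer.

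\emph{Membership in $\mathfrak X$.} For $S=S(A,B,C,D,\lambda)$ and $S'=S(A',B',C',D',\lambda')$ we compute $[S,S']$ blockwise. The $(2,2)$ and $(3,3)$ blocks vanish because scalar blocks commute. The $(2,3)$ block is $\lambda D'+D\lambda'-\lambda'D-\lambda D'=0$. Hence only the first block row, i.e.\ rows $1,2$, can be nonzero, so $\operatorname{rank}[S,S']\le 2$ and $\mathcal A\in\mathfrak X$. In the same computation one checks that $\mathcal A$ is closed under multiplication, so $\mathcal A$ is a unital algebra.

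\emph{Normalizer.} Put $F_1=E_1$ and $F_2=E_1\oplus E_2$. The key observation is the intrinsic description
\[
\mathcal A=\{S\in M_6(\mathbb C):\ \exists\lambda\in\mathbb C,\ (S-\lambda)\mathbb C^6\subseteq F_2,\ (S-\lambda)F_2\subseteq F_1\}.
\]
It follows at once that any $g$ stabilizing the flag satisfies $g\mathcal A g^{-1}=\mathcal A$, so $\operatorname{Stab}(F_1\subsetneq F_2)\subseteq N(\mathcal A)$.

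For the reverse inclusion we recover $F_1$ and $F_2$ from $\mathcal A$ by conjugation-equivariant constructions.
\begin{itemize}
\item[(i)] $F_1=\sum_{S,T\in\mathcal A}[S,T]\,\mathbb C^6$. The commutators are supported on the first block row, and their $(1,1)$ blocks fill $\mathfrak{sl}_2$ while their $(1,2)$ blocks are arbitrary, so the images span exactly $F_1$.
\item[(ii)] $F_2=J\,\mathbb C^6$, where $J=\{S:A=0,\ \lambda=0\}$. This $J$ is a nilpotent two-sided ideal with $\mathcal A/J\cong M_2(\mathbb C)\times\mathbb C$ semisimple, so $J$ is the Jacobson radical of $\mathcal A$. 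Its $C$ and $D$ blocks are arbitrary, so $J\mathbb C^6=F_2$.
\end{itemize}
Both constructions are preserved by conjugation. Hence $g\mathcal A g^{-1}=\mathcal A$ forces $gF_1=F_1$ and $gF_2=F_2$, giving $N(\mathcal A)\subseteq\operatorname{Stab}(F_1\subsetneq F_2)$.

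\emph{Orbit geometry.} The orbit map induces a bijective morphism
\[
\operatorname{GL}_6/\operatorname{Stab}(F_1\subsetneq F_2)=\operatorname{Fl}(2,4;6)\longrightarrow\mathcal O\subseteq\operatorname{Gr}(17,M_6(\mathbb C)),
\]
and $\mathcal O$ is homogeneous, hence smooth. To get an isomorphism rather than only a bijection, one must check that the differential of the orbit map is injective, i.e.\ the scheme-theoretic stabilizer is reduced. In characteristic $0$ this is automatic, but one can also verify directly that the Lie algebra normalizer $\{X:[X,\mathcal A]\subseteq\mathcal A\}$ equals the block upper triangular parabolic, by rerunning (i) and (ii) infinitesimally.

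Then $\dim\mathcal O=\dim\operatorname{Fl}(2,4;6)=2\cdot4+2\cdot2=12$. Since $\operatorname{Fl}(2,4;6)$ is complete, its image is closed in the Grassmannian, so $\mathcal O$ is projective, irreducible and nonsingular.

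The main obstacle is making the reverse inclusion fully rigorous. One has to check the claimed spanning in (i), and in (ii) that $J$ really is the radical. A cleaner alternative for (ii) is to characterize $F_2$ as $\sum_{S\in\mathcal A}(S-\lambda_S)\mathbb C^6$, where $\lambda_S$ is the unique eigenvalue of $S$ whose eigenspace has dimension at least $2$ in the generic case. I would fall back to this only if the radical description proved awkward.
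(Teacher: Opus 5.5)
Your proposal is correct and follows essentially the same route as the paper: a direct block computation for membership, then recovering the flag from conjugation-invariant data of $\mathcal A$ to get $N_{\operatorname{GL}_6(\mathbb C)}(\mathcal A)\subseteq\operatorname{Stab}$, and checking that the block-upper-triangular stabilizer normalizes $\mathcal A$. The differences are minor: the paper recovers both $E_1$ and $E_1\oplus E_2$ from $J^2$ alone (as $\sum_{R\in J^2}\operatorname{im}R$ and $\bigcap_{R\in J^2}\ker R$) rather than from commutator images and $J\mathbb C^6$, and it asserts $\mathcal O\cong\operatorname{GL}_6(\mathbb C)/N$ without addressing the bijective-morphism-versus-isomorphism point that you settle via characteristic~$0$.
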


\begin{proof}
For $S,T\in\mathcal A$, direct block multiplication gives
\[
[S,T]
=
\begin{pmatrix}
*&*&*\\
0&0&0\\
0&0&0
\end{pmatrix}.
\]
Hence $\operatorname{rank}[S,T]\leq2$, so $\mathcal A\in\mathfrak X$.

Let $J=\operatorname{rad}(\mathcal A)$.  Explicitly,
\[
J=
\left\{
\begin{pmatrix}
0&B&C\\
0&0&D\\
0&0&0
\end{pmatrix}:
B,C,D\in M_2(\mathbb C)
\right\},
\]
and
\[
J^2=
\left\{
\begin{pmatrix}
0&0&C\\
0&0&0\\
0&0&0
\end{pmatrix}:
C\in M_2(\mathbb C)
\right\}.
\]
Indeed, $J$ is nilpotent and
$\mathcal A/J\cong M_2(\mathbb C)\oplus\mathbb C$ is semisimple.  Moreover,
\begin{equation}\label{eq:intrinsic-flag}
\sum_{R\in J^2}\operatorname{im}R=E_1,
\quad
\bigcap_{R\in J^2}\ker R=E_1\oplus E_2.
\end{equation}
Every element of $N_{\operatorname{GL}_6(\mathbb C)}(\mathcal A)$ preserves $J^2$ under conjugation and
therefore preserves the two subspaces in \eqref{eq:intrinsic-flag}.  Thus
\[
N_{\operatorname{GL}_6(\mathbb C)}(\mathcal A)
\subseteq
\operatorname{Stab}_{\operatorname{GL}_6(\mathbb C)}(E_1\subsetneq E_1\oplus E_2).
\]

Conversely, an element of the stabilizer on the right is block upper triangular
with respect to $E_1\oplus E_2\oplus E_3$.  Conjugating an element of $\mathcal A$
by such a matrix preserves block upper triangularity, and the last two diagonal
blocks remain the same scalar matrix.  Hence the stabilizer normalizes
$\mathcal A$, proving equality.

The orbit map therefore induces
\[
\mathcal O
\cong
\operatorname{GL}_6(\mathbb C)/N_{\operatorname{GL}_6(\mathbb C)}(\mathcal A)
\cong
\operatorname{Fl}(2,4;6).
\]
Finally,
\[
\dim\operatorname{Fl}(2,4;6)
=
\dim\operatorname{Gr}(2,6)+\dim\operatorname{Gr}(2,4)
=8+4=12.
\]
The remaining assertions are standard properties of flag varieties.
\end{proof}

\section{Tangent-space computation}

The purpose of this section is to prove
\[
T_{\mathcal A}\mathfrak X=T_{\mathcal A}\mathcal O.
\]
We first introduce affine coordinates on the Grassmannian and compute the
tangent space to $\mathcal O$ at $\mathcal A$.  We then linearize the rank-two
condition and show that every tangent vector to $\mathfrak X$ at $\mathcal A$
lies in $T_{\mathcal A}\mathcal O$.

\subsection{Tangent coordinates}

We use the complement
\begin{equation}\label{eq:complement}
\mathcal W=
\left\{
\begin{pmatrix}
0&0&0\\
P&U&0\\
Q&R&V
\end{pmatrix}:
P,Q,R,U,V\in M_2(\mathbb C),\
\operatorname{tr}U+\operatorname{tr}V=0
\right\}.
\end{equation}
For a matrix $Z=(Z_{ij})_{1\leq i,j\leq3}$ written in $2\times2$ blocks, define
\[
\alpha(Z)
=
\frac14\bigl(\operatorname{tr}Z_{22}+\operatorname{tr}Z_{33}\bigr).
\]
The projection onto $\mathcal W$ along $\mathcal A$ is
\begin{equation}\label{eq:projection}
\pi_{\mathcal W}(Z)
=
\begin{pmatrix}
0&0&0\\
Z_{21}&Z_{22}-\alpha(Z)I_2&0\\
Z_{31}&Z_{32}&Z_{33}-\alpha(Z)I_2
\end{pmatrix}.
\end{equation}
In particular,
\[
M_6(\mathbb C)=\mathcal A\oplus\mathcal W.
\]

Let $\mathcal U_{\mathcal W}$ be the open subset of the Grassmannian consisting
of subspaces complementary to $\mathcal W$.  The graph map
\[
\Theta\colon
\operatorname{Hom}(\mathcal A,\mathcal W)
\longrightarrow
\mathcal U_{\mathcal W},
\quad
\psi\longmapsto
\Gamma_\psi
=
\{S+\psi(S):S\in\mathcal A\},
\]
is an affine-chart isomorphism and satisfies $\Theta(0)=\mathcal A$.  We therefore
identify
\begin{equation}\label{eq:tangent-chart}
T_{\mathcal A}\operatorname{Gr}(17,M_6(\mathbb C))
\cong
\operatorname{Hom}(\mathcal A,\mathcal W).
\end{equation}
For $\phi\in\operatorname{Hom}(\mathcal A,\mathcal W)$, write
\begin{equation}\label{eq:phi-blocks}
\phi(S)
=
\begin{pmatrix}
0&0&0\\
p(S)&u(S)&0\\
q(S)&r(S)&v(S)
\end{pmatrix},
\quad S\in\mathcal A,
\end{equation}
where $p,q,r,u,v\colon\mathcal A\to M_2(\mathbb C)$ are linear and
\begin{equation}\label{eq:trace-condition}
\operatorname{tr}u(S)+\operatorname{tr}v(S)=0
\quad(S\in\mathcal A).
\end{equation}

\subsection{Tangent space to the conjugacy orbit}

We next compute the tangent vectors arising from the differential of the
conjugation action in the affine coordinates above.
For $L,M,N\in M_2(\mathbb C)$, set
\[
X_{L,M,N}
=
\begin{pmatrix}
0&0&0\\
L&0&0\\
M&N&0
\end{pmatrix}
\]
and define
\[
\delta_{L,M,N}\colon\mathcal A\longrightarrow\mathcal W,
\quad
\delta_{L,M,N}(S)
=
\pi_{\mathcal W}([X_{L,M,N},S]).
\]
For $S=S(A,B,C,D,\lambda)$, set
\[
\beta_{L,M}(S)=\frac14\operatorname{tr}(LB+MC).
\]
Formulas \eqref{eq:projection} and \eqref{eq:A-definition} give
\begin{equation}\label{eq:delta-formula}
\begin{aligned}
\bigl(\delta_{L,M,N}(S)\bigr)_{21}
&=L(A-\lambda I_2)-DM,\\
\bigl(\delta_{L,M,N}(S)\bigr)_{31}
&=M(A-\lambda I_2),\\
\bigl(\delta_{L,M,N}(S)\bigr)_{32}
&=MB,\\
\bigl(\delta_{L,M,N}(S)\bigr)_{22}
&=LB-DN-\beta_{L,M}(S)I_2,\\
\bigl(\delta_{L,M,N}(S)\bigr)_{33}
&=MC+ND-\beta_{L,M}(S)I_2.
\end{aligned}
\end{equation}
All remaining $2\times2$ blocks of $\delta_{L,M,N}(S)$ are zero.

\begin{proposition}\label{prop:orbit-tangent}
Under the identification \eqref{eq:tangent-chart},
\[
T_{\mathcal A}\mathcal O
=
\{\delta_{L,M,N}:L,M,N\in M_2(\mathbb C)\}.
\]
The map
\[
M_2(\mathbb C)^3
\longrightarrow
T_{\mathcal A}\mathcal O,
\quad
(L,M,N)\longmapsto\delta_{L,M,N},
\]
is an isomorphism of vector spaces.
\end{proposition}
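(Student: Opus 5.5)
The plan is to identify $T_{\mathcal A}\mathcal O$ with the image of the differential of the orbit map $g\mapsto g\mathcal A g^{-1}$ at the identity, and then to show that this differential factors through $\mathfrak{gl}_6(\mathbb C)/\mathfrak n$, where $\mathfrak n$ is the Lie algebra of $N_{\operatorname{GL}_6(\mathbb C)}(\mathcal A)$. By Proposition~\ref{prop:orbit}, $\mathcal O$ is a nonsingular orbit isomorphic to $\operatorname{GL}_6/N$. Since we work over $\mathbb C$ (characteristic zero), the orbit map $\operatorname{GL}_6\to\mathcal O$ is smooth, so its differential at $1$ is surjective onto $T_{\mathcal A}\mathcal O$ with kernel $\mathfrak n=\operatorname{Lie}(N)$. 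By Proposition~\ref{prop:orbit}, $\mathfrak n$ is the block upper triangular Lie algebra stabilizing $E_1\subsetneq E_1\oplus E_2$. The subspace $\{X_{L,M,N}\}$ of strictly block lower triangular matrices is a vector-space complement to $\mathfrak n$ in $\mathfrak{gl}_6$. Hence the differential restricted to $\{X_{L,M,N}\}$ is a bijection onto $T_{\mathcal A}\mathcal O$, and $\dim T_{\mathcal A}\mathcal O=12=\dim M_2(\mathbb C)^3$, consistent with Proposition~\ref{prop:orbit}.

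Next I would compute this differential in the chart \eqref{eq:tangent-chart}. For $X\in\mathfrak{gl}_6$, the curve $t\mapsto e^{tX}\mathcal A e^{-tX}$ consists of the subspaces $\{S+t[X,S]+O(t^2)\}$. Writing $[X,S]=a+w$ with $a\in\mathcal A$ and $w\in\mathcal W$, and reparametrizing each basis element by the map $S\mapsto S-t\,\pi_{\mathcal A}([X,S])$, the graph representative becomes $\psi_t=t\,\pi_{\mathcal W}\circ\operatorname{ad}_X|_{\mathcal A}+O(t^2)$. The tangent vector is therefore $S\mapsto\pi_{\mathcal W}([X,S])$, which equals $\delta_{L,M,N}$ when $X=X_{L,M,N}$. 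This gives $T_{\mathcal A}\mathcal O=\{\delta_{L,M,N}\}$ together with surjectivity of $(L,M,N)\mapsto\delta_{L,M,N}$.

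Injectivity can be checked directly from \eqref{eq:delta-formula} without appealing to smoothness, as a safeguard. Suppose $\delta_{L,M,N}=0$. From the $(3,2)$ block, $MB=0$ for all $B$, so $M=0$. From the $(2,1)$ block with $D=0$ and $\lambda=0$, $LA=0$ for all $A$, so $L=0$. Then the $(2,2)$ block reads $-DN=0$ for all $D$, so $N=0$. Linearity in $(L,M,N)$ is evident. Since the map is injective and $\dim T_{\mathcal A}\mathcal O=12$, it is also surjective onto $T_{\mathcal A}\mathcal O$, so only the containment $\{\delta_{L,M,N}\}\subseteq T_{\mathcal A}\mathcal O$ is needed, and that containment is exactly the curve computation above.

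The main obstacle is justifying rigorously that the tangent vector of the curve in the graph chart is $\pi_{\mathcal W}\circ\operatorname{ad}_X$. This requires checking that the chart isomorphism $\Theta$ sends the conjugated subspace to the graph of $\psi_t=\pi_{\mathcal W}\operatorname{Ad}_{e^{tX}}\bigl(\pi_{\mathcal A}\operatorname{Ad}_{e^{tX}}|_{\mathcal A}\bigr)^{-1}$, which is valid for small $t$, and then differentiating at $t=0$. I would carry this out explicitly. The dimension equality $\dim T_{\mathcal A}\mathcal O=\dim\mathcal O=12$ then follows from the nonsingularity in Proposition~\ref{prop:orbit}.
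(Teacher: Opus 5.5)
Your proposal is correct and follows the paper's proof: you compute the differential of the orbit map in the graph chart as $S\mapsto\pi_{\mathcal W}([X,S])$, verify injectivity of $(L,M,N)\mapsto\delta_{L,M,N}$ from the block formulas, and conclude by the dimension count $12=\dim\mathcal O=\dim T_{\mathcal A}\mathcal O$, which uses the nonsingularity from Proposition~\ref{prop:orbit}. Your additional argument via smoothness of the orbit map, with kernel $\operatorname{Lie}(N)$ in characteristic zero, is also valid, but the dimension count makes it unnecessary.
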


\begin{proof}
For $X\in M_6(\mathbb C)$, write
\[
X=Y+X_{L,M,N},
\quad
Y=
\begin{pmatrix}
X_{11} & X_{12} & X_{13}\\
0 & X_{22} & X_{23}\\
0 & 0 & X_{33}
\end{pmatrix},
\quad
X_{L,M,N}
=
\begin{pmatrix}
0 & 0 & 0\\
L & 0 & 0\\
M & N & 0
\end{pmatrix}.
\]
Then, under the identification \eqref{eq:tangent-chart}, the differential at
the identity of the orbit map
\[
\rho:\operatorname{GL}_6(\mathbb C)\longrightarrow\operatorname{Gr}(17,M_6(\mathbb C)),
\quad
g\longmapsto g\mathcal A g^{-1}
\]
is
\[
(d\rho)_{I_6}:M_6(\mathbb C)\to\operatorname{Hom}(\mathcal A,\mathcal W),\quad X\mapsto (S\mapsto \pi_{\mathcal W}([X,S])=\delta_{L,M,N}(S)).
\]
Hence
\[
\operatorname{Im}(d\rho)_{I_6}=\left\{\delta_{L,M,N}: L, M, N \in M_2(\mathbb C)\right\}\subseteq T_{\mathcal A}\mathcal O.
\]
By \eqref{eq:delta-formula}, the map
\[
M_2(\mathbb C)^3 \to\operatorname{Hom}(\mathcal{A},\mathcal W),\quad (L, M, N) \mapsto \delta_{L,M,N}
\]
is injective. It remains to prove the surjectivity. Since
\[
\operatorname{dim} \operatorname{Im}(d\rho)_{I_6}=\operatorname{dim} M_2(\mathbb C)^3=12 =  \operatorname{dim} \mathcal{O}=\operatorname{dim} T_{\mathcal A}\mathcal O,
\]
we have
\[
\operatorname{Im}(d\rho)_{I_6} = T_{\mathcal A}\mathcal O,
\]
as desired.
\end{proof}

\subsection{Linearization of the rank condition}

To determine which elements of $\operatorname{Hom}(\mathcal A,\mathcal W)$
are tangent to $\mathfrak X$, we linearize the determinantal rank condition.
The first lemma gives the tangent space to the rank-at-most-two determinantal
variety at a rank-two matrix.  The second is an auxiliary matrix identity that
will solve the final family of linearized constraints.

\begin{lemma}\label{lem:determinantal-tangent}
Let
\[
\mathcal D_2
=
\{C\in M_6(\mathbb C):\operatorname{rank}C\leq2\}.
\]
If $C_0\in\mathcal D_2$ has rank $2$, then
\begin{equation}\label{eq:determinantal-tangent}
T_{C_0}\mathcal D_2
=
\{C_1\in M_6(\mathbb C):
C_1(\ker C_0)\subseteq\operatorname{im}C_0\}.
\end{equation}
\end{lemma}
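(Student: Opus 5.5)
We will prove the equality in \eqref{eq:determinantal-tangent} by proving both inclusions for the Zariski tangent space of $\mathcal D_2$, computed from its vanishing ideal. The key tool is the ideal of $3\times3$ minors together with normal forms under $\operatorname{GL}_6\times\operatorname{GL}_6$.

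\emph{Reduction to a normal form.} The action $(g,h)\cdot C=gCh^{-1}$ preserves $\mathcal D_2$ and acts by linear automorphisms of $M_6(\mathbb C)$. It therefore carries $T_{C_0}\mathcal D_2$ onto $T_{gC_0h^{-1}}\mathcal D_2$. It also transforms the right-hand side of \eqref{eq:determinantal-tangent} compatibly, because $\ker(gC_0h^{-1})=h\ker C_0$ and $\operatorname{im}(gC_0h^{-1})=g\operatorname{im}C_0$. We may therefore assume
\[
C_0=\begin{pmatrix}I_2&0\\0&0\end{pmatrix},
\]
with $\ker C_0=\operatorname{span}(e_3,\dots,e_6)$ and $\operatorname{im}C_0=\operatorname{span}(e_1,e_2)$. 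In block form $C_1=\begin{pmatrix}X&Y\\Z&W\end{pmatrix}$ with $W$ of size $4\times4$, the right-hand side becomes exactly $\{W=0\}$, which has dimension $36-16=20$.

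\emph{Inclusion $\subseteq$.} Each $3\times3$ minor vanishes on $\mathcal D_2$, so its differential at $C_0$ kills $T_{C_0}\mathcal D_2$. Take the minor with rows $\{1,2,i\}$ and columns $\{1,2,j\}$ for $i,j\geq3$. Expanding $\det$ of that submatrix of $C_0+tC_1$ to first order in $t$ gives $t\,(C_1)_{ij}$, since the $I_2$ block contributes $1$. Hence every tangent vector satisfies $W=0$.

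\emph{Inclusion $\supseteq$.} We would use that $\mathcal D_2$ is irreducible of dimension $36-16=20$. It is the image of $M_{6\times2}\times M_{2\times6}$ under $(A,B)\mapsto AB$, so it is irreducible, and the generic fibre is a $\operatorname{GL}_2$-orbit, giving $48-4=44-24=20$. Next, the map
\[
(X,Y,Z)\mapsto\begin{pmatrix}X&Y\\Z&ZX^{-1}Y\end{pmatrix}
\]
for $X$ invertible is a bijective parametrization of an open neighbourhood of $C_0$ in $\mathcal D_2$. Its differential at $(I_2,0,0)$ is $(X_1,Y_1,Z_1)\mapsto\begin{pmatrix}X_1&Y_1\\Z_1&0\end{pmatrix}$, so every matrix with $W=0$ is the velocity vector of a curve in $\mathcal D_2$. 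Velocity vectors of curves lie in the Zariski tangent space, since each $f$ in the ideal vanishes along the curve and hence so does its derivative at $t=0$. This gives the reverse inclusion.

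\emph{Main obstacle.} The delicate point is that the Zariski tangent space is defined via the full vanishing ideal, not just the minors. The direction $\subseteq$ only uses that the minors lie in the ideal, so it is safe. The direction $\supseteq$ needs only the curve argument, which is also safe. Thus neither direction requires knowing that the minors generate a radical ideal, and the two inclusions together show that $C_0$ is a nonsingular point of dimension $20$.
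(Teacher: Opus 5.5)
Your proof is correct, but it does not follow the paper's route, because the paper gives no argument: it simply cites the standard determinantal tangent-space formula from Harris. The citation buys brevity and the general rank-$k$, $m\times n$ statement.

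Your version is a self-contained, elementary proof. It works directly with the Zariski tangent space of the full vanishing ideal, which is the convention fixed in Section~2. After moving $C_0$ to $\operatorname{diag}(I_2,0)$ by the linear action $C\mapsto gCh^{-1}$, you get the two inclusions as follows:
\begin{itemize}
\item The inclusion $\subseteq$ comes from differentiating the $3\times3$ minors on rows $\{1,2,i\}$ and columns $\{1,2,j\}$. The differential of $\det$ at $\operatorname{diag}(1,1,0)$ picks out the $(3,3)$ entry, which is $(C_1)_{ij}$.
\item The inclusion $\supseteq$ comes from the explicit curves $t\mapsto\begin{pmatrix}I_2+tX_1&tY_1\\ tZ_1&t^2Z_1(I_2+tX_1)^{-1}Y_1\end{pmatrix}$, which lie in $\mathcal D_2$.
\end{itemize}
As you note, neither direction needs radicality of the ideal of minors or any dimension theory.

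The proof of Theorem~\ref{thm:tangent-space-equality} only uses the inclusion $T_{C_0}\mathcal D_2\subseteq\{C_1:C_1(\ker C_0)\subseteq\operatorname{im}C_0\}$. Your minor computation is exactly a self-contained justification of that direction.

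There are two small blemishes.
\begin{itemize}
\item The count ``$48-4=44-24=20$'' is garbled. It should read $\dim(M_{6\times2}\times M_{2\times6})-\dim\operatorname{GL}_2=24-4=20$.
\item That paragraph, including the irreducibility claim, is never used by your curve argument and can be dropped. For the closing nonsingularity remark, the block parametrization already identifies the open set of $\mathcal D_2$ where $X$ is invertible with $\operatorname{GL}_2\times M_{2\times4}\times M_{4\times2}$, which is smooth of dimension $20$.
\end{itemize}
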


\begin{proof}
This is the standard tangent-space formula for determinantal varieties;
see \cite[Example~14.16]{Harris}.
\end{proof}

\begin{lemma}\label{lem:functional-identity}
Let $U,V\colon M_2(\mathbb C)\to M_2(\mathbb C)$ be linear maps satisfying
\begin{equation}\label{eq:functional-identity}
U(Y)Z-ZV(Y)+YV(Z)-U(Z)Y=0
\quad(Y,Z\in M_2(\mathbb C)).
\end{equation}
Then there exist $N\in M_2(\mathbb C)$ and a linear functional
$\tau\colon M_2(\mathbb C)\to\mathbb C$ such that
\begin{equation}\label{eq:functional-solution}
U(Y)=-YN+\tau(Y)I_2,
\quad
V(Y)=NY+\tau(Y)I_2.
\end{equation}
If, in addition,
\[
\operatorname{tr}U(Y)+\operatorname{tr}V(Y)=0
\quad(Y\in M_2(\mathbb C)),
\]
then $\tau=0$.
\end{lemma}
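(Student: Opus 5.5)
The plan has two parts. First, the identity \eqref{eq:functional-identity} is linear in the pair $(U,V)$, and the proposed family \eqref{eq:functional-solution} is an $8$-dimensional space of solutions. So it suffices to check that this family solves the identity and that the full solution space has dimension at most $8$. I would do this by normalizing an arbitrary solution, subtracting off members of the family until only the zero solution can remain.

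\emph{Verification.} Substitute $U(Y)=-YN+\tau(Y)I_2$ and $V(Y)=NY+\tau(Y)I_2$. The $N$-terms give
\[
-YNZ-ZNY+YNZ+ZNY=0.
\]
The $\tau$-terms give
\[
\tau(Y)Z-\tau(Y)Z+\tau(Z)Y-\tau(Z)Y=0.
\]

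\emph{Reduction with $Y=I_2$.} Setting $Y=I_2$ in \eqref{eq:functional-identity} yields
\[
U(Z)=V(Z)-aZ+Zb,\qquad a=U(I_2),\ b=V(I_2).
\]
So $U$ is determined by $V$ together with $a,b$. Now replace $(U,V)$ by $(U+YN,\,V-NY)$ with $N=b-\tfrac12\operatorname{tr}(b)I_2$. After this we may assume $b$ is scalar. Then, evaluating at $I_2$, the relation gives $a=U(I_2)=V(I_2)-a+b$, so $a=b$ is the same scalar. Subtracting a suitable $\tau$ (for instance $\tau(Y)=c\operatorname{tr}(Y)/2$ with $c$ the scalar value of $b$) makes $U(I_2)=V(I_2)=0$, and then $U=V$.

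With $U=V$, the identity becomes
\[
[V(Y),Z]=[V(Z),Y]\quad\text{for all }Y,Z.
\]
Taking $Z=Y$ is vacuous. Taking $Y,Z$ among the matrix units $E_{11},E_{12},E_{21},E_{22}$ (note $E_{22}=I_2-E_{11}$, so $V(E_{22})=-V(E_{11})$) produces a small linear system in the $12$ remaining unknown entries of $V$ on $E_{11},E_{12},E_{21}$.

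\emph{Solving the reduced system.} I would first use the pair $(E_{11},E_{12})$ and the pair $(E_{11},E_{21})$. Since $[E_{11},\cdot\,]$ kills diagonal matrices and scales the off-diagonal units, these pairs should show that $V(E_{12})$ and $V(E_{21})$ are tied to $V(E_{11})$. The pair $(E_{12},E_{21})$ should then force what remains to be of the form $Y\mapsto \tau'(Y)I_2$ together with a residual $N'$-type term. Both of these are again absorbed by the family, so the normalized solution is zero.

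I expect the main obstacle to be this last bookkeeping step. The difficulty is organizing the normalizations so that the residual freedom is visibly exactly the $8$ parameters $(N,\tau)$, with no overlooked extra solutions. If the hand elimination gets messy, the fallback is to write the $32\times 32$ linear system in the basis of matrix units and check directly that its kernel has dimension $8$.

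\emph{Trace condition.} For the final assertion, take traces in \eqref{eq:functional-solution}:
\[
\operatorname{tr}U(Y)+\operatorname{tr}V(Y)=-\operatorname{tr}(YN)+\operatorname{tr}(NY)+4\tau(Y)=4\tau(Y).
\]
So the hypothesis $\operatorname{tr}U(Y)+\operatorname{tr}V(Y)=0$ forces $\tau=0$.
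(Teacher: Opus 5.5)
Your normalization step breaks down. Setting $Y=I_2$ in \eqref{eq:functional-identity} gives $aZ-Zb+V(Z)-U(Z)=0$, i.e.\ $U(Z)=V(Z)+aZ-Zb$; you have the signs of the $a$- and $b$-terms reversed. With the correct relation, evaluating at $Z=I_2$ gives only the tautology $a=a$, so nothing supports your conclusion $a=b$. It is in fact false. The solution $U(Y)=-Y$, $V(Y)=Y$ (the family member with $N=I_2$, $\tau=0$) has $b=I_2$ already scalar. Your traceless $N=b-\tfrac12\operatorname{tr}(b)I_2$ is therefore zero and nothing is subtracted, yet $a=-I_2\neq b$. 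Subtracting a $\tau$-term shifts $U(I_2)$ and $V(I_2)$ by the same scalar, so you can never make both vanish here. The reduction to $U=V$ does not go through.

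The missing ingredient is the one non-formal fact about $(a,b)$: that $a+b=U(I_2)+V(I_2)$ is scalar. Subtracting the family member with parameters $(N,\tau)$ sends $(a,b)$ to $\bigl(a+N-\tau(I_2)I_2,\ b-N-\tau(I_2)I_2\bigr)$. So $a=b=0$ is reachable exactly when $a+b$ is scalar, and then only with the full $N=(b-a)/2$, scalar part included. This cannot come from the $Y=I_2$ substitution alone; it needs the identity for general $Y,Z$. The paper obtains it by substituting the $Z=I_2$ relation back into \eqref{eq:functional-identity} and taking traces. That yields $\operatorname{tr}\bigl((a+b)[Y,Z]\bigr)=0$ for all $Y,Z$, and since commutators span $\mathfrak{sl}_2(\mathbb C)$, $a+b$ is scalar. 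With that repair, your reduced equation $[V(Y),Z]=[V(Z),Y]$ is exactly the paper's $\Psi$-identity. Its solutions are precisely the scalar-valued maps; when $U=V$, no residual $N'$-term can survive. So the rest of your outline and your trace computation at the end are fine. The $32\times32$ kernel computation would also prove the lemma, but as written it is only a fallback, not an argument.
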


\begin{proof}
Set
\[
A_0=U(I_2),
\quad
B_0=V(I_2),
\quad
N=\frac{B_0-A_0}{2},
\quad
C_0=\frac{A_0+B_0}{2},
\]
and define
\[
\Phi(Y)=U(Y)+YN-C_0Y.
\]
Taking $Z=I_2$ in \eqref{eq:functional-identity} gives
\[
U(Y)-V(Y)+YB_0-A_0Y=0,
\]
and hence
\begin{equation}\label{eq:V-from-U}
V(Y)=U(Y)+YB_0-A_0Y.
\end{equation}
Substituting \eqref{eq:V-from-U} into \eqref{eq:functional-identity} gives
\begin{equation}\label{eq:Phi-identity}
[\Phi(Y),Z]+[Y,\Phi(Z)]
=
-C_0[Y,Z]-[Y,Z]C_0.
\end{equation}
Taking traces in \eqref{eq:Phi-identity} yields
\[
\operatorname{tr}(C_0[Y,Z])=0
\quad(Y,Z\in M_2(\mathbb C)).
\]
Since the commutators span $\mathfrak{sl}_2(\mathbb C)$ and the trace-orthogonal
complement of $\mathfrak{sl}_2(\mathbb C)$ is $\mathbb C I_2$, one has
$C_0=c_0I_2$ for some $c_0\in\mathbb C$.

Define $\Psi(Y)=\Phi(Y)+c_0Y=U(Y)+YN$.  Equation \eqref{eq:Phi-identity} becomes
\begin{equation}\label{eq:Psi-identity}
[\Psi(Y),Z]+[Y,\Psi(Z)]=0
\quad(Y,Z\in M_2(\mathbb C)).
\end{equation}
Taking $Z=I_2$ shows that $\Psi(I_2)$ is scalar.  Modulo scalar matrices, write
\[
\overline{\Psi(H)}=aH+bE+cF,\quad 
\overline{\Psi(E)}=dH+eE+fF,\quad 
\overline{\Psi(F)}=gH+hE+iF,
\]
where
\[
H=
\begin{pmatrix}1&0\\0&-1\end{pmatrix},
\quad
E=
\begin{pmatrix}0&1\\0&0\end{pmatrix},
\quad
F=
\begin{pmatrix}0&0\\1&0\end{pmatrix}.
\]
Applying \eqref{eq:Psi-identity} to $(H,E)$, $(H,F)$, and $(E,F)$ gives,
respectively,
\[
c=f=0,\quad e=-a,\quad b=h=0,\quad i=-a,\quad d=g=0,\quad e+i=0.
\]
Thus $a=0$, and every $\Psi(Y)$ is scalar.  Hence
$\Psi(Y)=\tau(Y)I_2$ for a linear functional $\tau$, which gives
\eqref{eq:functional-solution}.  Finally,
\[
\operatorname{tr}U(Y)+\operatorname{tr}V(Y)=4\tau(Y),
\]
so the additional trace condition implies $\tau=0$.
\end{proof}

\subsection{Proof of the tangent-space equality}

We now assemble the preceding ingredients.  Starting with an arbitrary tangent
vector $\phi\in T_{\mathcal A}\mathfrak X$, we apply the determinantal tangent
condition to a small collection of rank-two test commutators.  The first two
steps determine the components that agree with a tangent vector to $\mathcal O$
and subtract that tangent vector; the final step reduces the remaining terms to
Lemma~\ref{lem:functional-identity}.

For $X\in M_2(\mathbb C)$, define
\[
\begin{aligned}
\mathsf A(X)&=S(X,0,0,0,0),\\
\mathsf B(X)&=S(0,X,0,0,0),\\
\mathsf C(X)&=S(0,0,X,0,0),\\
\mathsf D(X)&=S(0,0,0,X,0),\\
\Lambda&=S(0,0,0,0,1).
\end{aligned}
\]
Then
\begin{equation}\label{eq:A-decomposition}
\mathcal A
=
\mathsf A(M_2(\mathbb C))
\oplus\mathsf B(M_2(\mathbb C))
\oplus\mathsf C(M_2(\mathbb C))
\oplus\mathsf D(M_2(\mathbb C))
\oplus\mathbb C\Lambda.
\end{equation}
For any component map $f\in\{p,q,r,u,v\}$, write
$f_{\mathsf A}=f\circ\mathsf A$, and similarly for
$\mathsf B,\mathsf C,\mathsf D$.

\begin{theorem}\label{thm:tangent-space-equality}
The tangent space of $\mathfrak X$ at $\mathcal A$ is the tangent space to
$\mathcal O$:
\[
T_{\mathcal A}\mathfrak X
=
T_{\mathcal A}\mathcal O.
\]
Consequently,
\[
\dim_{\mathbb C}T_{\mathcal A}\mathfrak X=12.
\]
\end{theorem}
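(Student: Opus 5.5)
We have $T_{\mathcal A}\mathcal O\subseteq T_{\mathcal A}\mathfrak X$ because $\mathcal O\subseteq\mathfrak X$ by Proposition~\ref{prop:orbit}. By Proposition~\ref{prop:orbit-tangent}, it therefore suffices to show that every $\phi\in T_{\mathcal A}\mathfrak X\subseteq\operatorname{Hom}(\mathcal A,\mathcal W)$ equals some $\delta_{L,M,N}$. The dimension statement then follows, since $\dim T_{\mathcal A}\mathcal O=12$.

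\emph{Linearized constraint.} The vanishing ideal of $\mathfrak X$ in the chart $\Theta$ contains all $3\times3$ minors of the commutators
\[
[S+\psi(S),\,T+\psi(T)],\qquad S,T\in\mathcal A.
\]
Differentiating at $\psi=0$ in the direction $\phi$ gives the following condition. Whenever $C_0=[S,T]$ has rank exactly $2$, the matrix
\[
C_1=[\phi(S),T]+[S,\phi(T)]
\]
lies in $T_{C_0}\mathcal D_2$. By Lemma~\ref{lem:determinantal-tangent}, this means $C_1(\ker C_0)\subseteq\operatorname{im}C_0$.

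The resulting conditions are polynomial in the parameters of $S$ and $T$. So it is enough to impose them on a Zariski-dense set of test pairs, for instance pairs in which the relevant $2\times2$ product is invertible. This lets us clear the rank-two hypothesis at the end.

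\emph{Test pairs.} I would use pairs built from the decomposition \eqref{eq:A-decomposition}:
\[
(\mathsf A(X),\mathsf B(Y)),\quad(\mathsf A(X),\mathsf C(Y)),\quad(\mathsf B(X),\mathsf D(Y)),\quad(\mathsf D(Y),\mathsf C(Z)),\quad(\Lambda,\mathsf B(Y)),\quad(\Lambda,\mathsf C(Y)),
\]
together with pairs from $\mathsf A\times\mathsf A$. For example, $[\mathsf A(X),\mathsf B(Y)]$ is concentrated in block $(1,2)$ and equals $XY$. For invertible $XY$, its kernel is $E_1\oplus E_3$ and its image is $E_1$. The condition then says that the lower blocks (rows $2,3$) of $C_1$ in columns $1$ and $3$ vanish. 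Writing these out via \eqref{eq:phi-blocks} gives linear identities among $p,q,r,u,v$ evaluated at $\mathsf A(X)$ and $\mathsf B(Y)$.

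\emph{Order of the steps.}
\begin{enumerate}
\item Use the pairs involving $\mathsf A$, $\mathsf C$ and $\Lambda$ to show that $q$ has the form $q(S)=M(A-\lambda I_2)$ and $r(S)=MB$ for a single $M$. Here $M$ is read off from $q_{\mathsf A}(I_2)$, and I expect a Schur-type argument to show that $q_{\mathsf A}$ is left multiplication by $M$. In the same way, $p$ has the form $L(A-\lambda I_2)-DM$.
\item Subtract $\delta_{L,M,0}$, so that $p=q=r=0$ may be assumed.
\item The surviving constraints, from pairs such as $(\mathsf D(Y),\mathsf B(Z))$ and $(\mathsf D(Y),\mathsf D(Z))$, then involve only $U=u_{\mathsf D}$ and $V=v_{\mathsf D}$. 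They should reduce precisely to the identity \eqref{eq:functional-identity}. Lemma~\ref{lem:functional-identity} together with \eqref{eq:trace-condition} then gives $U(Y)=-YN$ and $V(Y)=NY$, which matches \eqref{eq:delta-formula}. Subtract $\delta_{0,0,N}$.
\item Show that the remaining components $u_{\mathsf A},u_{\mathsf B},u_{\mathsf C},u_\Lambda$ and the corresponding components of $v$ all vanish, again using test pairs whose kernel and image are in favourable position.
\end{enumerate}

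\emph{Main obstacle.} I expect the hardest part to be the bookkeeping in the first and last steps. One must choose, for each component, a test pair whose commutator has rank exactly $2$ and whose kernel/image pair isolates that component. Pairs $(\mathsf A(X),\mathsf A(Y))$ are delicate: $[X,Y]$ has rank $2$ only generically, and its image and kernel are not block-aligned. The $\Lambda$-components and the scalar (trace) parts of $u$ and $v$ also need care, since the normalization $\alpha$ in \eqref{eq:projection} mixes them. I would first settle the block-aligned pairs. I would then use $(\Lambda,\cdot)$ pairs, where $[\Lambda,\mathsf B(Y)]=-\mathsf B(Y)$ has kernel $E_2\oplus E_3$ and image $E_1$, to kill the $\Lambda$-dependence and the leftover scalar terms.
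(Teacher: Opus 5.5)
There is a genuine gap. Your overall architecture is the paper's: linearize via Lemma~\ref{lem:determinantal-tangent}, subtract $\delta_{L,M,0}$, reduce the $\mathsf D$-summand to Lemma~\ref{lem:functional-identity}, then subtract $\delta_{0,0,N}$. But the test pairs you propose cannot produce the needed identities, and choosing them is the real content of the proof.

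\textbf{Your Step~3.} By \eqref{eq:linearized-blocks}, $u$ and $v$ enter the lower block rows of $C_1$ only through $(C_1)_{23}$, as $u(S)D'-D'v(S)+Dv(T)-u(T)D$. So \eqref{eq:functional-identity} can only come from a pair with two properties: \emph{both} $S$ and $T$ carry $\mathsf D$-components, and $\ker C_0$ contains vectors with nonzero $E_3$-component. Your pairs fail this.
\begin{itemize}
\item $[\mathsf D(Y),\mathsf D(Z)]=0$, and likewise $[\mathsf D(Y),\mathsf C(Z)]=0$. At $C_0=0$ the Zariski tangent space of $\mathcal D_2$ is all of $M_6(\mathbb C)$, so these pairs impose nothing.
\item $[\mathsf D(Y),\mathsf B(Z)]=-\mathcal E_{13}(ZY)$ has kernel $E_1\oplus E_2$. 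It yields only $Yq_{\mathsf B}(Z)=0$, $q_{\mathsf D}(Y)Z=0$ and $p_{\mathsf D}(Y)Z+Yr_{\mathsf B}(Z)=0$.
\end{itemize}
The component $r_{\mathsf D}$, which you silently absorb into ``$r(S)=MB$'', has the same problem. The missing device is to let a noncommuting $\mathsf A$-part supply a rank-two commutator with kernel $E_2\oplus E_3$. For $(\mathsf A(E)+\mathsf D(Y),\mathsf A(F)+\mathsf D(Z))$ one has $C_0=\mathcal E_{11}(H)$ for all $Y,Z$. The vanishing of $(C_1)_{22}$, $(C_1)_{33}$ and $(C_1)_{23}$ then gives $r_{\mathsf D}=0$ and exactly \eqref{eq:functional-identity}.

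\textbf{Your Steps~1 and~4.} The same defect undermines these. Running through all pairs drawn from single summands of \eqref{eq:A-decomposition}, they impose only $p_{\mathsf B}=q_{\mathsf B}=p_{\mathsf C}=q_{\mathsf C}=q_{\mathsf D}=0$ and $p_{\mathsf D}(Y)X+Yr_{\mathsf B}(X)=0$. The reasons:
\begin{itemize}
\item $(\mathsf A,\Lambda)$ and $(\mathsf A,\mathsf D)$ commute.
\item $(\mathsf A(X),\mathsf A(Y))$ gives the block-aligned $C_0=\mathcal E_{11}([X,Y])$, but every constrained block of $C_1$ vanishes identically.
\item For $(\mathsf A(X),\mathsf B(Y))$ and $(\mathsf A(X),\mathsf C(Y))$, the kernel excludes exactly the block column in which $p_{\mathsf A}(X)Y$ and $q_{\mathsf A}(X)Y$ appear.
\end{itemize}
So there is nothing for a ``Schur-type argument'' to act on. The maps $p_{\mathsf A},q_{\mathsf A},r_{\mathsf A},r_{\mathsf C}$, the matrices $p(\Lambda),q(\Lambda),r(\Lambda)$, and all of $u,v$ remain unconstrained.

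There is also a kernel error in your last paragraph. $[\Lambda,\mathsf B(Y)]=-\mathcal E_{12}(Y)$ has kernel $E_1\oplus E_3$, not $E_2\oplus E_3$. Since $D=D'=0$ for that pair, it says nothing about $u(\Lambda)$ or $v(\Lambda)$.

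\textbf{What is needed.} You need mixed test elements whose commutator stays rank two with image $E_1$, while the variable lands in a block of $C_1$ that the kernel sees. The paper uses:
\begin{itemize}
\item $(\mathsf A(X)+\mathsf B(I_2),\Lambda)$, with $C_0=\mathcal E_{12}(I_2)$, which gives $p_{\mathsf A}(X)=LX$ and $q_{\mathsf A}(X)=MX$;
\item $(\mathsf B(I_2)+\mathsf D(Y),\Lambda)$, which gives $p_{\mathsf D}(Y)=-YM$, $r(\Lambda)=0$ and $Yv(\Lambda)=u(\Lambda)Y$;
\item $(\mathsf B(X),\Lambda+\mathsf D(Y))$, whose commutator $\mathcal E_{12}(X)+\mathcal E_{13}(XY)$ has kernel $\{(x_1,-Yx_3,x_3)\}$. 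This gives $u_{\mathsf B}(X)Y=Yv_{\mathsf B}(X)$, and analogous pairs handle $u_{\mathsf A},v_{\mathsf A},u_{\mathsf C},v_{\mathsf C}$.
\end{itemize}
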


\begin{proof}
The inclusion $T_{\mathcal A}\mathcal O\subseteq T_{\mathcal A}\mathfrak X$
follows from $\mathcal O\subseteq\mathfrak X$.  For the reverse inclusion, let
$\phi\in\operatorname{Hom}(\mathcal A,\mathcal W)$ represent a tangent vector to
$\mathfrak X$ through \eqref{eq:tangent-chart}. For
\[
S=S(A,B,C,D,\lambda),
\quad
T=S(A',B',C',D',\mu),
\]
set
\[
C_0=[S,T],
\quad
C_1=[\phi(S),T]+[S,\phi(T)].
\]
For fixed $S$ and $T$, define the regular map
\[
\mathcal C_{S,T}\colon
\operatorname{Hom}(\mathcal A,\mathcal W)
\longrightarrow M_6(\mathbb C),
\quad
\psi\longmapsto[S+\psi(S),T+\psi(T)].
\]
It satisfies
\[
\mathcal C_{S,T}\bigl(\Theta^{-1}(\mathfrak X\cap\mathcal U_{\mathcal W})\bigr)
\subseteq\mathcal D_2.
\]
Therefore, whenever $\operatorname{rank}C_0=2$, its differential at $0$ and
Lemma~\ref{lem:determinantal-tangent} give
\begin{equation}\label{eq:kernel-image-condition}
C_1(\ker C_0)\subseteq\operatorname{im}C_0.
\end{equation}
Direct block multiplication gives the lower two block rows of $C_1$:
\begin{equation}\label{eq:linearized-blocks}
\begin{aligned}
(C_1)_{21}
&=p(S)(A'-\mu I_2)-D'q(S)
  -p(T)(A-\lambda I_2)+Dq(T),\\
(C_1)_{22}
&=p(S)B'-D'r(S)+Dr(T)-p(T)B,\\
(C_1)_{23}
&=p(S)C'+u(S)D'-D'v(S)
  +Dv(T)-p(T)C-u(T)D,\\
(C_1)_{31}
&=q(S)(A'-\mu I_2)-q(T)(A-\lambda I_2),\\
(C_1)_{32}
&=q(S)B'-q(T)B,\\
(C_1)_{33}
&=q(S)C'+r(S)D'-q(T)C-r(T)D.
\end{aligned}
\end{equation}
\medskip
\noindent\emph{Step 1: test commutators and the resulting restrictions.}
For brevity, write $\mathcal E_{ij}(X)$ for the $3\times3$ block matrix
whose $(i,j)$-block is $X$ and whose remaining blocks are zero.  Set
\begin{equation}\label{eq:L-M-definition}
L=-p(\Lambda),
\qquad
M=-q(\Lambda).
\end{equation}
In each row of Table~\ref{tab:step1-tests}, all displayed variables $X$ and $Y$ are initially assumed
invertible, so that $\operatorname{rank}C_0=2$ in every row.  Hence
\eqref{eq:kernel-image-condition} determines which entries of $C_1$ must
vanish, and substitution into \eqref{eq:linearized-blocks} gives the
restrictions in the final column.  The rows are read in order, and restrictions
obtained from preceding rows are used when simplifying subsequent rows.

Each variable-dependent restriction is linear in the relevant variable.
Since $\operatorname{GL}_2(\mathbb C)$ spans $M_2(\mathbb C)$, these
restrictions extend to arbitrary $X,Y\in M_2(\mathbb C)$.

\begin{center}
\begin{samepage}
\begingroup
\setlength{\abovecaptionskip}{0pt}
\setlength{\belowcaptionskip}{2pt}
\captionof{table}{Test commutators and resulting restrictions in Step~1.}
\label{tab:step1-tests}
\footnotesize
\setlength{\tabcolsep}{5pt}
\renewcommand{\arraystretch}{1.18}
\setlength{\extrarowheight}{1pt}
\setlength{\aboverulesep}{0.45ex}
\setlength{\belowrulesep}{0.55ex}
\begin{tabular}{@{}
p{0.37\textwidth}
p{0.14\textwidth}
p{0.42\textwidth}
@{}}
\toprule
\textbf{Test pair $(S,T)$}
&
\textbf{$C_0=[S,T]$}
&
\textbf{Resulting restrictions}
\\
\midrule

$\bigl(\mathsf B(X),\Lambda\bigr)$
&
$\mathcal E_{12}(X)$
&
$\begin{aligned}
p_{\mathsf B}(X)&=0,\\
q_{\mathsf B}(X)&=0
\end{aligned}$
\\

\midrule

$\bigl(\mathsf C(X),\Lambda\bigr)$
&
$\mathcal E_{13}(X)$
&
$\begin{aligned}
p_{\mathsf C}(X)&=0,\\
q_{\mathsf C}(X)&=0
\end{aligned}$
\\

\midrule

$\bigl(\mathsf A(X)+\mathsf B(I_2),\Lambda\bigr)$
&
$\mathcal E_{12}(I_2)$
&
$\begin{aligned}
p_{\mathsf A}(X)&=LX,\\
q_{\mathsf A}(X)&=MX
\end{aligned}$
\\

\midrule

$\bigl(\mathsf B(I_2)+\mathsf D(Y),\Lambda\bigr)$
&
$\mathcal E_{12}(I_2)$
&
$\begin{aligned}
p_{\mathsf D}(Y)&=-YM,\\
q_{\mathsf D}(Y)&=0,\\
Yv(\Lambda)&=u(\Lambda)Y,\\
r(\Lambda)&=0
\end{aligned}$
\\

\midrule

$\bigl(\mathsf B(X),\mathsf D(I_2)\bigr)$
&
$\mathcal E_{13}(X)$
&
$r_{\mathsf B}(X)=MX$
\\

\midrule

$\bigl(\mathsf A(X)+\mathsf B(I_2),\mathsf D(I_2)\bigr)$
&
$\mathcal E_{13}(I_2)$
&
$r_{\mathsf A}(X)=0$
\\

\midrule

$\bigl(\mathsf B(I_2)+\mathsf C(X),\mathsf D(I_2)\bigr)$
&
$\mathcal E_{13}(I_2)$
&
$r_{\mathsf C}(X)=0$
\\

\bottomrule
\end{tabular}
\endgroup
\end{samepage}
\end{center}
The fourth row of Table~\ref{tab:step1-tests} also gives
\[
Yv(\Lambda)=u(\Lambda)Y
\qquad
(Y\in M_2(\mathbb C)).
\]
Taking $Y=I_2$ first gives
$u(\Lambda)=v(\Lambda)$.  The same identity then shows that this common
matrix commutes with every element of $M_2(\mathbb C)$, and hence is scalar.
The trace condition \eqref{eq:trace-condition} therefore gives
\begin{equation}\label{eq:lambda-vanishing}
u(\Lambda)=v(\Lambda)=0.
\end{equation}
Combining the restrictions in Table~\ref{tab:step1-tests} with
\eqref{eq:A-decomposition}, we obtain
\begin{equation}\label{eq:pq-and-r}
\begin{aligned}
p(S(A,B,C,D,\lambda))
&=L(A-\lambda I_2)-DM,\\
q(S(A,B,C,D,\lambda))
&=M(A-\lambda I_2),\\
r_{\mathsf A}(X)
&=0,\\
r_{\mathsf B}(X)
&=MX,\\
r_{\mathsf C}(X)
&=0,\\
r(\Lambda)
&=0.
\end{aligned}
\end{equation}
Since
\[
\delta_{L,M,0}\in T_{\mathcal A}\mathcal O
\subseteq T_{\mathcal A}\mathfrak X,
\]
we may replace $\phi$ by $\phi-\delta_{L,M,0}$ and therefore assume
\begin{equation}\label{eq:phi-after-subtraction}
p=q=0,
\quad
r_{\mathsf A}=r_{\mathsf B}=r_{\mathsf C}=0,
\quad
r(\Lambda)=0,
\quad
u(\Lambda)=v(\Lambda)=0.
\end{equation}

\medskip
\noindent\emph{Step 2: the diagonal blocks on the
$\mathsf A$-, $\mathsf B$-, and $\mathsf C$-summands.}
We again use \eqref{eq:kernel-image-condition} together with
\eqref{eq:phi-after-subtraction}.  In Table~\ref{tab:step2-tests}, we initially take
$X,Y\in\operatorname{GL}_2(\mathbb C)$.  Then in each row, $C_0$ has rank $2$
and image $E_1$.  The kernel-image condition forces the relevant lower block of
$C_1$ to vanish, and substitution into \eqref{eq:linearized-blocks} gives the
restriction in the final column.  The rows are read in order, and restrictions
obtained from preceding rows are used when simplifying subsequent rows.  Since
each resulting identity is bilinear in $X$ and $Y$, and
$\operatorname{GL}_2(\mathbb C)$ spans $M_2(\mathbb C)$, these identities
extend to all $X,Y\in M_2(\mathbb C)$.
\begin{center}
\captionof{table}{Diagonal-block restrictions in Step~2.}
\label{tab:step2-tests}
\begingroup
\footnotesize
\setlength{\tabcolsep}{5pt}
\renewcommand{\arraystretch}{1.25}
\setlength{\extrarowheight}{2pt}
\setlength{\aboverulesep}{0.55ex}
\setlength{\belowrulesep}{0.75ex}
\begin{tabular}{@{}
p{0.39\textwidth}
p{0.25\textwidth}
p{0.29\textwidth}
@{}}
\toprule
\textbf{Test pair $(S,T)$}
&
\textbf{$C_0=[S,T]$}
&
\textbf{Resulting restrictions}
\\
\midrule

$\bigl(\mathsf B(X),\Lambda+\mathsf D(Y)\bigr)$
&
$\mathcal E_{12}(X)+\mathcal E_{13}(XY)$
&
$u_{\mathsf B}(X)Y=Yv_{\mathsf B}(X)$
\\

\midrule

$\bigl(\mathsf A(X)+\mathsf B(I_2),
\Lambda+\mathsf D(Y)\bigr)$
&
$\mathcal E_{12}(I_2)+\mathcal E_{13}(Y)$
&
$u_{\mathsf A}(X)Y=Yv_{\mathsf A}(X)$
\\

\midrule

$\bigl(\mathsf B(I_2)+\mathsf C(X),
\Lambda+\mathsf D(Y)\bigr)$
&
$\mathcal E_{12}(I_2)+\mathcal E_{13}(X+Y)$
&
$u_{\mathsf C}(X)Y=Yv_{\mathsf C}(X)$
\\

\bottomrule
\end{tabular}
\endgroup
\end{center}
For the first row, taking $Y=I_2$ shows that
$u_{\mathsf B}(X)=v_{\mathsf B}(X)$.  The displayed identity then implies
that this common matrix is scalar, and the trace condition
\eqref{eq:trace-condition} forces it to vanish.  Since the invertible matrices
span $M_2(\mathbb C)$,
\begin{equation}\label{eq:B-diagonal-zero}
u_{\mathsf B}=v_{\mathsf B}=0.
\end{equation}
This restriction is used when simplifying the second and third rows of
Table~\ref{tab:step2-tests}.  Applying the same argument to those two rows gives
\begin{equation}\label{eq:ABC-diagonal-zero}
u_{\mathsf A}=v_{\mathsf A}
=
u_{\mathsf C}=v_{\mathsf C}
=0.
\end{equation}

\medskip
\noindent\emph{Step 3: the $\mathsf D$-summand.}
By \eqref{eq:phi-after-subtraction}, \eqref{eq:B-diagonal-zero}, and
\eqref{eq:ABC-diagonal-zero}, the only possibly nonzero components of $\phi$ are
$r_{\mathsf D}$, $u_{\mathsf D}$, and $v_{\mathsf D}$.  Define
\[
R(Y)=r_{\mathsf D}(Y),
\qquad
U(Y)=u_{\mathsf D}(Y),
\qquad
V(Y)=v_{\mathsf D}(Y).
\]
Let
\[
E=
\begin{pmatrix}
0&1\\
0&0
\end{pmatrix},
\qquad
F=
\begin{pmatrix}
0&0\\
1&0
\end{pmatrix},
\qquad
H=[E,F]
=
\begin{pmatrix}
1&0\\
0&-1
\end{pmatrix}.
\]
For arbitrary $Y,Z\in M_2(\mathbb C)$, the test pair in
Table~\ref{tab:step3-tests} has $C_0$ of rank $2$, with image $E_1$ and
kernel $E_2\oplus E_3$.
All restrictions established in Steps~1 and~2 are in force.  Hence
\eqref{eq:kernel-image-condition} forces the lower blocks of $C_1$ in the
second and third block columns to vanish.  Substitution into
\eqref{eq:linearized-blocks} gives the identities in the final column.

\begin{center}
\captionof{table}{The remaining test commutator and identities in Step~3.}
\label{tab:step3-tests}
\begingroup
\footnotesize
\setlength{\tabcolsep}{5pt}
\renewcommand{\arraystretch}{1.25}
\setlength{\extrarowheight}{2pt}
\setlength{\aboverulesep}{0.55ex}
\setlength{\belowrulesep}{0.75ex}
\begin{tabular}{@{}
p{0.30\textwidth}
p{0.14\textwidth}
p{0.49\textwidth}
@{}}
\toprule
\textbf{Test pair $(S,T)$}
&
\textbf{$C_0=[S,T]$}
&
\textbf{Resulting identities}
\\
\midrule

$\bigl(
\mathsf A(E)+\mathsf D(Y),
\mathsf A(F)+\mathsf D(Z)
\bigr)$
&
$\mathcal E_{11}(H)$
&
$\begin{gathered}
ZR(Y)=YR(Z),\\
R(Y)Z=R(Z)Y,\\
U(Y)Z-ZV(Y)+YV(Z)-U(Z)Y=0
\end{gathered}$
\\

\bottomrule
\end{tabular}
\endgroup
\end{center}
Taking $Z=I_2$ in the first two identities gives
\[
R(Y)=YR(I_2)=R(I_2)Y.
\]
Hence $R(I_2)$ is scalar, say $R(I_2)=cI_2$, and therefore
$R(Y)=cY$.  Substitution into either identity yields
$c[Z,Y]=0$ for all $Y,Z\in M_2(\mathbb C)$, so $c=0$.  Thus
\[
r_{\mathsf D}=0.
\]
The third identity is precisely the hypothesis of
Lemma~\ref{lem:functional-identity} for
$U=u_{\mathsf D}$ and $V=v_{\mathsf D}$.  Moreover,
\eqref{eq:trace-condition}, applied to $\mathsf D(Y)$, gives
\[
\operatorname{tr}U(Y)+\operatorname{tr}V(Y)=0.
\]
The lemma therefore yields some $N\in M_2(\mathbb C)$ such that
\[
U(Y)=-YN,
\qquad
V(Y)=NY
\qquad
(Y\in M_2(\mathbb C)).
\]
It follows from \eqref{eq:delta-formula} that
\[
\phi=\delta_{0,0,N},
\]
and hence Proposition~\ref{prop:orbit-tangent} gives
$\phi\in T_{\mathcal A}\mathcal O$. This proves
\[
T_{\mathcal A}\mathfrak X=T_{\mathcal A}\mathcal O.
\]
The dimension statement follows from
Proposition~\ref{prop:orbit-tangent}.
\end{proof}

\section{Global classification}

Theorem~\ref{thm:tangent-space-equality} gives the equality
\[
T_{\mathcal A}\mathfrak X=T_{\mathcal A}\mathcal O.
\]
We first use this equality to show that $\mathcal O$ and
$\mathcal O^{\mathsf T}$ are irreducible components of $\mathfrak X$.  To
exclude any further component, we use the Borel fixed-point theorem to produce
an upper-triangular fixed point on an arbitrary component and then apply the
classification of Omladi\v{c}, Radjavi, and \v{S}ivic.

\begin{proposition}\label{prop:orbit-components}
The subvarieties $\mathcal O$ and $\mathcal O^{\mathsf T}$ are irreducible
components of $\mathfrak X$.  Every point of either orbit is nonsingular in
$\mathfrak X$, and no other irreducible component meets either orbit.
\end{proposition}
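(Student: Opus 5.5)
The plan is to use the standard principle that a point at which the Zariski tangent space of an algebraic set has the same dimension as a smooth irreducible subvariety through that point is a nonsingular point of the algebraic set, and that subvariety is locally all of it. First I treat $\mathcal O$ at the point $\mathcal A$. Let $Z$ be any irreducible component of $\mathfrak X$ containing $\mathcal A$. Then
\[
\dim Z\leq\dim_{\mathcal A}\mathfrak X\leq\dim T_{\mathcal A}\mathfrak X=12
\]
by Theorem~\ref{thm:tangent-space-equality}. On the other hand, $\mathcal O$ is irreducible, closed, of dimension $12$, and contained in $\mathfrak X$ (Propositions~\ref{prop:closed} and~\ref{prop:orbit}), so $\mathcal O$ lies in some component $Z_0$ with $\dim Z_0\geq12$. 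Hence $\dim Z_0=12$ and $Z_0=\mathcal O$, so $\mathcal O$ is a component.

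Next I show that $\mathcal O$ is the only component through $\mathcal A$. Since $\dim_{\mathcal A}\mathfrak X=\dim T_{\mathcal A}\mathfrak X$, the local ring $\mathcal O_{\mathfrak X,\mathcal A}$ (for the reduced structure) is regular, and in particular a domain. Therefore exactly one irreducible component passes through $\mathcal A$, and $\mathcal A$ is a nonsingular point of $\mathfrak X$. Concretely, any component $Z\ni\mathcal A$ satisfies $T_{\mathcal A}(Z\cup\mathcal O)\subseteq T_{\mathcal A}\mathfrak X$, which has dimension $12$. If $Z\neq\mathcal O$, the local ring at $\mathcal A$ would have at least two minimal primes, contradicting regularity. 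This is the standard fact that a regular local ring is a domain.

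The remaining step is homogeneity. $\operatorname{GL}_6(\mathbb C)$ acts on $\operatorname{Gr}(17,M_6(\mathbb C))$ by algebraic automorphisms preserving $\mathfrak X$, since conjugation preserves commutator ranks. It therefore permutes the irreducible components of $\mathfrak X$ and preserves the singular locus. As $\operatorname{GL}_6(\mathbb C)$ is connected, it fixes each component, so every $g\mathcal A g^{-1}$ is likewise a nonsingular point lying only on $\mathcal O$. For $\mathcal O^{\mathsf T}$, I use the transpose map $\mathcal V\mapsto\mathcal V^{\mathsf T}$. It is an automorphism of the Grassmannian preserving $\mathfrak X$, since $[S^{\mathsf T},T^{\mathsf T}]=-[S,T]^{\mathsf T}$ and transposition preserves rank. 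It also maps $\mathcal O$ onto $\mathcal O^{\mathsf T}$, because $(g\mathcal A g^{-1})^{\mathsf T}=g^{-\mathsf T}\mathcal A^{\mathsf T}g^{\mathsf T}$. So all the conclusions transfer to $\mathcal O^{\mathsf T}$.

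The only delicate point I expect is the step ``tangent dimension equals the dimension of a component through the point implies that the point is nonsingular and lies on a unique component.'' One must be careful that $T_{\mathcal A}\mathfrak X$ is computed from the vanishing ideal. That ideal contains the defining equations used in Theorem~\ref{thm:tangent-space-equality}, so the true tangent space is contained in the computed one. The computed space equals $T_{\mathcal A}\mathcal O$, and $T_{\mathcal A}\mathcal O$ is contained in the true tangent space, so all three coincide. This gives the needed equality regardless of which reduced or nonreduced structure is used.
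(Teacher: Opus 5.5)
Your proposal is correct and follows essentially the same route as the paper: the chain $12=\dim\mathcal O\leq\dim_{\mathcal A}\mathfrak X\leq\dim T_{\mathcal A}\mathfrak X=12$ makes the local ring at $\mathcal A$ regular, hence a domain, so exactly one component passes through $\mathcal A$, and by dimension that component is $\mathcal O$; conjugation and transposition then transfer these conclusions to every point of $\mathcal O$ and $\mathcal O^{\mathsf T}$. Your closing remark, that the tangent space cut out by the rank-two equations contains the true Zariski tangent space, makes explicit a point the paper leaves implicit in Theorem~\ref{thm:tangent-space-equality}.
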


\begin{proof}
Since $\mathcal O\subseteq\mathfrak X$ and $\mathcal A\in\mathcal O$,
Theorem~\ref{thm:tangent-space-equality} gives
\[
12=\dim\mathcal O
\leq\dim_{\mathcal A}\mathfrak X
\leq\dim_{\mathbb C}T_{\mathcal A}\mathfrak X
=12.
\]
Thus $\mathcal A$ is nonsingular and lies on a unique irreducible component
$C$ of $\mathfrak X$.  Since $\mathcal O$ is closed and irreducible,
$\mathcal O\subseteq C$.  Moreover,
\[
\dim C=\dim_{\mathcal A}\mathfrak X=12=\dim\mathcal O,
\]
and hence $C=\mathcal O$.

Conjugation by any element of $\operatorname{GL}_6(\mathbb C)$ is an algebraic
automorphism of $\mathfrak X$, so the same conclusions hold at every point of
$\mathcal O$.  Transposition is also an algebraic automorphism of $\mathfrak X$,
because
\[
[S^{\mathsf T},T^{\mathsf T}]=-[S,T]^{\mathsf T}.
\]
It sends $\mathcal O$ to $\mathcal O^{\mathsf T}$, proving the corresponding
statements for the transpose orbit.
\end{proof}

The only external classification input needed for the global argument is the
following specialization of \cite[Proposition~11]{ORS}.

\begin{theorem}[Omladi\v{c}--Radjavi--\v{S}ivic]\label{thm:ORS-special}
Let $\mathcal V\subseteq M_6(\mathbb C)$ be a complex linear subspace of
dimension $17$ satisfying:
\begin{enumerate}
\item $\operatorname{rank}[S,T]\leq2$ for all $S,T\in\mathcal V$;
\item $P\mathcal VP^{-1}=\mathcal V$ for every invertible upper-triangular matrix
$P$.
\end{enumerate}
Then $\mathcal V$ or $\mathcal V^{\mathsf T}$ is conjugate to $\mathcal A$.
\end{theorem}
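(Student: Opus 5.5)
Since this is the specialization of \cite[Proposition~11]{ORS} to $(n,k)=(6,2)$, the plan is to check that the hypotheses match and then retrace the structure of that argument. Write $\mathcal B$ for the group of invertible upper-triangular matrices, $\mathcal T\subseteq\mathcal B$ for the diagonal torus, and $E_{ij}$ for the matrix units.

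\emph{Step 1: weight decomposition.} Invariance under conjugation by $\mathcal T$ makes $\mathcal V$ a sum of $\mathcal T$-weight spaces. The weight of $E_{ij}$ is $t_i/t_j$; the off-diagonal weights are pairwise distinct and nontrivial, and the trivial weight space is the diagonal. Hence
\[
\mathcal V=\mathcal V_0\oplus\operatorname{span}\{E_{ij}:(i,j)\in\Sigma\},
\]
where $\mathcal V_0$ is a subspace of the diagonal matrices and $\Sigma$ is a set of off-diagonal positions.

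\emph{Step 2: closure conditions from the unipotent radical.} Differentiating the conjugation action gives $[E_{kl},\mathcal V]\subseteq\mathcal V$ for $k<l$. Applying this to the matrix units yields the following.
\begin{itemize}
\item[(a)] If $(i,j)\in\Sigma$, then $(k,j)\in\Sigma$ for $k<i$ and $(i,l)\in\Sigma$ for $l>j$ (off-diagonal cases only). Thus $\Sigma$ is closed upward and to the right.
\item[(b)] If $(i,j)\in\Sigma$ with $i>j$, then $E_{jj}-E_{ii}\in\mathcal V_0$.
\item[(c)] If a diagonal $\operatorname{diag}(d)\in\mathcal V_0$ has $d_k\neq d_l$ with $k<l$, then $(k,l)\in\Sigma$.
\end{itemize}
This is essentially the content of \cite[Lemma~8]{ORS}. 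It reduces $\mathcal V$ to a finite combinatorial datum: the shape $\Sigma$ together with the partition of $\{1,\dots,6\}$ given by the "equal diagonal coordinates" relation on $\mathcal V_0$.

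\emph{Step 3: the rank condition and the count.} Next I translate $\operatorname{rank}[S,T]\le2$ into conditions on this datum. Commutators of matrix units, and of diagonal elements with matrix units, yield explicit sparse matrices whose ranks are easy to read off. The key test uses generic elements of $\mathcal V$: the commutator $[\operatorname{diag}(d),X]$ has entries $(d_i-d_j)x_{ij}$, so its rank is at least the rank of the "non-equal-class" part of $\Sigma$ supported inside $\mathcal V$. This bounds how many distinct rows or columns may carry entries between different diagonal classes.

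Then I maximize $\dim\mathcal V_0+|\Sigma|$ under these constraints. I expect the extremal configurations to be exactly two: the block shape of $\mathcal A$, with diagonal classes $\{1\},\{2\},\{3,4,5,6\}$ together with a full $M_2$ block on $\{1,2\}$, and its reflection under the anti-diagonal transpose, which corresponds to $\mathcal V^{\mathsf T}$ up to conjugation by the reversal permutation. Note that $\mathcal A$ itself contains $E_{21}$ and is $\mathcal B$-invariant, consistent with (a)--(c). Any other shape will either fall below dimension $17$ or produce a commutator of rank $3$.

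\emph{Main obstacle.} The hardest part is this case analysis. It is a finite but sizeable enumeration over upward/right-closed shapes in a $6\times6$ grid, coupled with partitions of the diagonal. My first approach would be to follow the ORS induction: either some row is entirely contained in $\Sigma$ (or, dually, some column) and one peels it off, or the structure is controlled by the top-right corner. The dimension bound $nk+\lfloor (n-k)^2/4\rfloor+1$ is then tight only at the two shapes. Finally, I would verify that the two survivors are conjugate, respectively, to $\mathcal A$ and to $\mathcal A^{\mathsf T}$.
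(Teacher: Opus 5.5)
\textbf{Comparison with the paper.} The paper does not prove this statement. It imports it as the $(n,k)=(6,2)$ case of Proposition~11 of Omladi\v{c}--Radjavi--\v{S}ivic, so everything rests on that citation. If you also rely on the citation, you are done once you check the specialization, and your Steps~1--3 are unnecessary.

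\textbf{The gap in the self-contained route.} Read as an independent proof, your proposal has a genuine gap at Step~3. Steps~1 and~2 are correct: the torus splits $\mathcal V$ into $\mathcal V_0$ plus matrix units, and $[E_{kl},\mathcal V]\subseteq\mathcal V$ for $k<l$ gives exactly (a)--(c). But these steps only restate $\mathcal B$-invariance. Step~3 is the entire content of the theorem, and you only announce its outcome (``I expect the extremal configurations to be exactly two''). Moreover, the tests you name cannot deliver it:
\begin{itemize}
\item A commutator of two matrix units, or of a diagonal matrix with a matrix unit, always has rank at most $2$. These tests are therefore vacuous for $k=2$.
\item The generic test $[\operatorname{diag}(d),X]$ sees only entries joining distinct diagonal classes.
\end{itemize}
For example, let $\mathfrak n_6$ denote the strictly upper-triangular matrices and take $\mathcal V=\{\operatorname{diag}(a,b,b,b,b,b)\}\oplus\mathfrak n_6$. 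This space is $\mathcal B$-invariant of dimension $17$. It passes every test you list, since $[\operatorname{diag}(d),X]$ is supported in row~$1$. Yet $S=E_{12}+E_{34}$ and $T=E_{23}+E_{45}$ give $[S,T]=E_{13}-E_{24}+E_{35}$, which has rank $3$.

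Your constraints even allow dimension $19$. Take diagonal classes $\{1\},\{2\},\{3,4,5,6\}$, all off-diagonal entries of rows $1$ and $2$, and $\mathfrak n_4$ on $\{3,\dots,6\}$. So maximizing $\dim\mathcal V_0+|\Sigma|$ ``under these constraints'' neither recovers the bound $17$ nor isolates the two shapes. What is missing is an argument controlling commutators of combinations of several off-diagonal units. That is exactly the work the cited proposition does and your sketch does not.

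\textbf{A smaller inaccuracy.} The datum is not finite. $\mathcal V_0$ can be any subspace of the diagonal compatible with (b) and (c); it need not be the full space of diagonals constant on the classes. For instance, $\mathbb C\operatorname{diag}(1,2,3,4,5,6)\oplus\mathfrak n_6$ is $\mathcal B$-invariant, all its classes are singletons, and $\dim\mathcal V_0=1$. A genuine enumeration would also have to prove that an extremal $\mathcal V_0$ has the full class-constant form. As it stands, the proposal announces the case analysis rather than supplying it.
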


We apply this fixed-point classification to an arbitrary irreducible component
of $\mathfrak X$.  The following componentwise reduction is the refinement of
\cite[Lemma~8]{ORS} needed for the classification of the whole locus.

\begin{proposition}\label{prop:fixed-point-reduction}
Every irreducible component of $\mathfrak X$ meets
$\mathcal O\cup\mathcal O^{\mathsf T}$.
\end{proposition}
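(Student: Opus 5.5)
Let $Z$ be an irreducible component of $\mathfrak X$. The plan is to produce a point of $Z$ fixed by the upper-triangular Borel subgroup $B_6\subseteq\operatorname{GL}_6(\mathbb C)$ and then apply Theorem~\ref{thm:ORS-special} to that point.

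First, $Z$ is $\operatorname{GL}_6(\mathbb C)$-stable. Conjugation by any $g$ is an algebraic automorphism of $\mathfrak X$ (as in the proof of Proposition~\ref{prop:orbit-components}), so it permutes the finitely many irreducible components. Since $\operatorname{GL}_6(\mathbb C)$ is connected, the image of $\operatorname{GL}_6(\mathbb C)$ in the finite permutation group of components is trivial. To make this precise, fix a component $Z$ and a point $z\in Z$ lying on no other component. The map $g\mapsto g\cdot z$ has irreducible source, and for each $g$ the image $g\cdot Z$ is again a component. The set $\{g: g\cdot Z=Z\}$ is closed, being $\{g: g\cdot Z\subseteq Z\}$, an intersection of preimages of the closed set $Z$. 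It is also of finite index, since it is the stabilizer of $Z$ under an action on a finite set. A closed subgroup of finite index contains the identity component, which here is the whole connected group $\operatorname{GL}_6(\mathbb C)$. Hence $g\cdot Z=Z$ for every $g$.

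Next, $Z$ is closed in $\mathfrak X$, which is projective by Proposition~\ref{prop:closed}, so $Z$ is a nonempty projective variety. It carries an action of the connected solvable group $B_6$. The Borel fixed-point theorem then gives a point $\mathcal V\in Z$ with $P\mathcal VP^{-1}=\mathcal V$ for every invertible upper-triangular $P$. This $\mathcal V$ is a $17$-dimensional subspace satisfying $\operatorname{rank}[S,T]\le 2$, because $\mathcal V\in\mathfrak X$. Theorem~\ref{thm:ORS-special} therefore shows that $\mathcal V$ or $\mathcal V^{\mathsf T}$ is conjugate to $\mathcal A$. In the first case $\mathcal V\in\mathcal O$. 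In the second, $\mathcal V=(g\mathcal A g^{-1})^{\mathsf T}=(g^{\mathsf T})^{-1}\mathcal A^{\mathsf T}g^{\mathsf T}\in\mathcal O^{\mathsf T}$. Either way $Z$ meets $\mathcal O\cup\mathcal O^{\mathsf T}$.

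The main point needing care is the $\operatorname{GL}_6(\mathbb C)$-stability of an individual component rather than of $\mathfrak X$ as a whole. This is exactly the refinement over \cite[Lemma~8]{ORS}, which applies Borel's theorem to the whole locus; applying it to the whole locus would only show that $\mathfrak X$ has a fixed point, not that every component does. I would handle it by the connectedness argument above. Everything else is a direct invocation of earlier results.
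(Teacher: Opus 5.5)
Your proof is correct and follows the same route as the paper: you show that each component is $\operatorname{GL}_6(\mathbb C)$-stable, apply the Borel fixed-point theorem to the projective component, and invoke Theorem~\ref{thm:ORS-special}. The only difference is the stability step, where you use that the stabilizer of $Z$ is a closed finite-index subgroup of a connected group, while the paper takes the closure of the image of the irreducible variety $\operatorname{GL}_6(\mathbb C)\times C$ and uses maximality of $C$; both arguments are standard, and your sentence about a point $z$ lying on no other component is never used and can be dropped.
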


\begin{proof}
Let $C$ be an irreducible component of $\mathfrak X$.  Since $\operatorname{GL}_6(\mathbb C)$ is irreducible,
$\operatorname{GL}_6(\mathbb C)\times C$ is irreducible.  The closure of its
image under the regular action map is
an irreducible closed subset of $\mathfrak X$ containing $C$.  By maximality of
$C$, this closure is $C$.  Hence $gC=C$ for every $g\in\operatorname{GL}_6(\mathbb C)$.

Let $B\subseteq\operatorname{GL}_6(\mathbb C)$ be the subgroup of invertible upper-triangular matrices.  The
component $C$ is projective by Proposition~\ref{prop:closed}, and it is stable under
the connected solvable group $B$.  The Borel fixed-point theorem
\cite[Theorem~10.4]{Borel} therefore gives a $B$-fixed point
$\mathcal V\in C$.  This point satisfies both hypotheses of
Theorem~\ref{thm:ORS-special}; hence
\[
\mathcal V\in\mathcal O\cup\mathcal O^{\mathsf T}.
\]
\end{proof}

The preceding two propositions now determine all irreducible components of
$\mathfrak X$.

\begin{theorem}\label{thm:locus}
\[
\mathfrak X=\mathcal O\cup\mathcal O^{\mathsf T}.
\]
\end{theorem}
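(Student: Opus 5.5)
The plan is to combine Proposition~\ref{prop:orbit-components} with Proposition~\ref{prop:fixed-point-reduction}. The inclusion $\mathcal O\cup\mathcal O^{\mathsf T}\subseteq\mathfrak X$ is immediate. By Proposition~\ref{prop:orbit}, $\mathcal A\in\mathfrak X$, and $\mathfrak X$ is stable under conjugation. It is also stable under transposition, since $[S^{\mathsf T},T^{\mathsf T}]=-[S,T]^{\mathsf T}$ and rank is preserved.

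For the reverse inclusion, we use the fact that $\mathfrak X$ is a projective algebraic set (Proposition~\ref{prop:closed}). It is therefore a finite union of irreducible components $C_1,\dots,C_m$, and it suffices to show that each $C_i$ equals $\mathcal O$ or $\mathcal O^{\mathsf T}$.

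Fix a component $C$. By Proposition~\ref{prop:fixed-point-reduction}, $C$ contains a point $\mathcal V$ of $\mathcal O\cup\mathcal O^{\mathsf T}$; by symmetry under transposition, say $\mathcal V\in\mathcal O$. Proposition~\ref{prop:orbit-components} says that $\mathcal O$ is itself an irreducible component and that no other irreducible component meets $\mathcal O$. Since $C$ meets $\mathcal O$ at $\mathcal V$, we get $C=\mathcal O$. Likewise, if $\mathcal V\in\mathcal O^{\mathsf T}$, then $C=\mathcal O^{\mathsf T}$.

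Hence every component is one of the two orbits, so $\mathfrak X=\bigcup_i C_i\subseteq\mathcal O\cup\mathcal O^{\mathsf T}$.

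All the substantive work sits in the two cited propositions, so the assembly is formal. The one point needing care is the phrase ``no other irreducible component meets either orbit''. It rests on nonsingularity at every orbit point: a nonsingular point of $\mathfrak X$ lies on a unique irreducible component. This holds because the local ring at a nonsingular point is regular, hence a domain, so only one minimal prime (one component) passes through that point. Since Proposition~\ref{prop:orbit-components} already provides this uniqueness at every point of $\mathcal O$ via the conjugation action, no further obstacle arises.
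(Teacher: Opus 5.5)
Your proposal is correct and matches the paper's proof: the easy inclusion comes from conjugation and transposition invariance, and for the reverse inclusion Proposition~\ref{prop:fixed-point-reduction} makes each component meet one of the two orbits, after which Proposition~\ref{prop:orbit-components} forces that component to equal the orbit. Your remark that a nonsingular point lies on a unique component because its local ring is regular, hence a domain, is exactly the fact behind the ``no other component meets either orbit'' clause you invoke.
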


\begin{proof}
The inclusion from right to left follows from Proposition~\ref{prop:orbit} and
invariance under conjugation and transposition.  Conversely, let
$x\in\mathfrak X$ and let $C$ be an irreducible component containing $x$.
By Proposition~\ref{prop:fixed-point-reduction}, the component $C$ meets one of
the two orbits $\mathcal O$ and $\mathcal O^{\mathsf T}$.  Proposition~\ref{prop:orbit-components} implies that the
orbit it meets is the unique irreducible component through the intersection point.
Thus $C$ equals that orbit, and
$x\in\mathcal O\cup\mathcal O^{\mathsf T}$.
\end{proof}

\begin{proof}[Proof of Theorem~\ref{thm:main}]
By Theorem~\ref{thm:locus}, either $\mathcal V$ is conjugate to $\mathcal A$ or
$\mathcal V$ is conjugate to $\mathcal A^{\mathsf T}$.  In the second case, write
$\mathcal V=Q\mathcal A^{\mathsf T}Q^{-1}$.  Then
\[
\mathcal V^{\mathsf T}
=
Q^{-\mathsf T}\mathcal A Q^{\mathsf T},
\]
so $\mathcal V^{\mathsf T}$ is conjugate to $\mathcal A$.
\end{proof}

\section{Distinctness of the two components}

Theorem~\ref{thm:locus} gives
\[
\mathfrak X=\mathcal O\cup\mathcal O^{\mathsf T},
\]
but the two orbits could still coincide a priori.  It remains to show that they
are distinct.  Since conjugate matrix algebras are isomorphic, it suffices to
distinguish $\mathcal A$ from $\mathcal A^{\mathsf T}$ by an algebra-isomorphism
invariant.  We use the bimodule structure of the radical square.

\begin{proposition}\label{prop:distinct}
The algebras $\mathcal A$ and $\mathcal A^{\mathsf T}$ are not isomorphic.
Consequently, $\mathcal O$ and $\mathcal O^{\mathsf T}$ are distinct.
\end{proposition}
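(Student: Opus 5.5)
The plan is to compare the radical square $J^2$ of each algebra as a bimodule over the semisimple quotient, since this is an isomorphism invariant. If $\varphi\colon\mathcal A\to\mathcal A^{\mathsf T}$ is an algebra isomorphism, then $\varphi$ maps $J=\operatorname{rad}(\mathcal A)$ onto $J'=\operatorname{rad}(\mathcal A^{\mathsf T})=J^{\mathsf T}$, hence $J^2$ onto $(J')^2$. It therefore induces an isomorphism of semisimple quotients $\bar\varphi\colon\mathcal A/J\to\mathcal A^{\mathsf T}/J'$, and $J^2\cong(J')^2$ as bimodules after twisting by $\bar\varphi$.

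Both quotients are isomorphic to $M_2(\mathbb C)\oplus\mathbb C$. Since the two simple factors have different dimensions, $\bar\varphi$ must send the $M_2$-factor to the $M_2$-factor and the $\mathbb C$-factor to the $\mathbb C$-factor. Consequently it maps the primitive central idempotents $e_A$ (the $M_2$-unit) and $e_\lambda$ (the scalar unit) to the corresponding idempotents on the other side.

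Next I will read off which idempotents act nontrivially on each side of the radical square. Take lifts $e_A=S(I_2,0,0,0,0)$ and $e_\lambda=\Lambda$ in $\mathcal A$. For $R=\mathcal E_{13}(C)\in J^2$, a direct computation gives
\[
e_AR=R,\qquad Re_\lambda=R,\qquad Re_A=0,\qquad e_\lambda R=0.
\]
So $J^2$ is nonzero exactly in the component $e_A J^2 e_\lambda$, i.e.\ as a bimodule it is (left $M_2$-factor, right $\mathbb C$-factor). In $\mathcal A^{\mathsf T}$ the lifts are the transposes, and $(J')^2=\{\mathcal E_{31}(C^{\mathsf T})\}$. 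For these elements $e_\lambda^{\mathsf T}R'=R'$ and $R'e_A^{\mathsf T}=R'$, so $(J')^2=e_\lambda(J')^2e_A$, which is (left $\mathbb C$-factor, right $M_2$-factor).

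Because $\bar\varphi$ preserves the labelling of the factors, these two bimodule types cannot match, so no such $\varphi$ exists. The independence of the lifts should be checked: $e\,x$ and $x\,e$ for $x\in J^2$ depend only on $e$ modulo $J$, since $J\cdot J^2=J^2\cdot J=J^3=0$. The last identity holds because $J$ is strictly block upper triangular with three blocks.

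Finally, if $\mathcal O=\mathcal O^{\mathsf T}$, then $\mathcal A^{\mathsf T}=P\mathcal AP^{-1}$ for some $P$, and conjugation by $P$ would be an algebra isomorphism $\mathcal A\to\mathcal A^{\mathsf T}$, a contradiction. The two orbits are therefore distinct, and by Theorem~\ref{thm:locus} they are disjoint.

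The step needing most care is the argument that $\bar\varphi$ respects the factor labelling, together with the invariance of the one-sided idempotent actions. Both are settled by the dimension mismatch between the simple factors and by $J^3=0$.
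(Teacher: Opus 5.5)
Your proposal is correct and is essentially the paper's argument. Both treat $J^2$ as a bimodule over $\mathcal B/J\cong M_2(\mathbb C)\oplus\mathbb C$ and observe that the identity of the $M_2(\mathbb C)$-factor acts on the left of $J_{\mathcal A}^2$ but on the right of $J_{\mathcal A^{\mathsf T}}^2$; the paper records this as the invariant pair $(4,0)$ versus $(0,4)$. Your extra checks (independence of lifts via $J^3=0$, and preservation of the factor labelling by dimension) spell out what the paper summarizes as ``preserved by algebra isomorphisms.'' One small misattribution: disjointness of $\mathcal O$ and $\mathcal O^{\mathsf T}$ follows simply from their being distinct orbits of the same group action, not from Theorem~\ref{thm:locus}.
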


\begin{proof}
Let $\mathcal B$ be either $\mathcal A$ or $\mathcal A^{\mathsf T}$, and set
$J_{\mathcal B}=\operatorname{rad}(\mathcal B)$.  Since
$J_{\mathcal B}^3=0$, multiplication makes $J_{\mathcal B}^2$ an
$(\mathcal B/J_{\mathcal B})$-bimodule.  The semisimple quotient is
\[
\mathcal B/J_{\mathcal B}\cong M_2(\mathbb C)\oplus\mathbb C.
\]
Let $\bar e_{\mathcal B}$ be the identity of the unique noncommutative simple
summand.  The ordered pair
\[
\iota(\mathcal B)
=
\left(
\dim_{\mathbb C}(\bar e_{\mathcal B}J_{\mathcal B}^2),
\dim_{\mathbb C}(J_{\mathcal B}^2\bar e_{\mathcal B})
\right)
\]
is preserved by algebra isomorphisms.

For $\mathcal A$, the space $J_{\mathcal A}^2$ is the $(1,3)$ block.  With
$e=\operatorname{diag}(I_2,0,0)$, every $R\in J_{\mathcal A}^2$ satisfies
$eR=R$ and $Re=0$.  Hence
\[
\iota(\mathcal A)=(4,0).
\]
For $\mathcal A^{\mathsf T}$, the radical square is the $(3,1)$ block, so
\[
\iota(\mathcal A^{\mathsf T})=(0,4).
\]
Thus $\mathcal A$ and $\mathcal A^{\mathsf T}$ are not isomorphic.  Conjugate
matrix algebras are isomorphic, so their conjugacy orbits are distinct.
\end{proof}

\begin{corollary}\label{cor:components}
The algebraic set $\mathfrak X$ is the disjoint union of exactly two
nonsingular projective irreducible components, each isomorphic to
$\operatorname{Fl}(2,4;6)$.
\end{corollary}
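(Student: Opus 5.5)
The corollary follows by assembling results already established. Theorem~\ref{thm:locus} gives $\mathfrak X=\mathcal O\cup\mathcal O^{\mathsf T}$, and Proposition~\ref{prop:distinct} shows that these two orbits are distinct.

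\emph{Disjointness.} I will use that they are $\operatorname{GL}_6(\mathbb C)$-orbits. Two orbits of the same group action are either equal or disjoint. Since they are not equal, $\mathcal O\cap\mathcal O^{\mathsf T}=\emptyset$. Concretely, a common point $\mathcal V$ would be conjugate both to $\mathcal A$ and to $\mathcal A^{\mathsf T}$. This would make $\mathcal A\cong\mathcal A^{\mathsf T}$ as algebras, contradicting Proposition~\ref{prop:distinct}.

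\emph{Exactly two components, each nonsingular and isomorphic to the flag variety.}
\begin{enumerate}
\item By Proposition~\ref{prop:orbit}, $\mathcal O$ is closed, irreducible, projective and isomorphic to $\operatorname{Fl}(2,4;6)$.
\item Transposition is an algebraic automorphism of the Grassmannian preserving $\mathfrak X$, as noted in the proof of Proposition~\ref{prop:orbit-components}. It carries $\mathcal O$ isomorphically onto $\mathcal O^{\mathsf T}$, so $\mathcal O^{\mathsf T}$ has the same properties.
\item Proposition~\ref{prop:orbit-components} says both orbits are irreducible components of $\mathfrak X$, every point of either orbit is a nonsingular point of $\mathfrak X$, and no other component meets them.
\item Since $\mathfrak X$ is the union of these two closed irreducible sets, and neither contains the other (they are disjoint and nonempty), its irreducible components are exactly $\mathcal O$ and $\mathcal O^{\mathsf T}$.
\end{enumerate}

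\emph{The one subtle point.} The statement "nonsingular components" should mean that $\mathfrak X$ itself is nonsingular along them. This is supplied by the tangent-space equality of Theorem~\ref{thm:tangent-space-equality}, transported to every point by conjugation and transposition, as in Proposition~\ref{prop:orbit-components}. Since $\mathfrak X$ is a disjoint union of two closed pieces, each piece is also open in $\mathfrak X$. Hence locally $\mathfrak X$ coincides with a smooth orbit, so the variety and scheme-free classical notions of smoothness agree here. No further computation is needed.
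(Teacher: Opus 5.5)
Your proposal is correct and follows the paper's proof: Theorem~\ref{thm:locus} gives $\mathfrak X=\mathcal O\cup\mathcal O^{\mathsf T}$, and Proposition~\ref{prop:distinct} together with the fact that distinct orbits are disjoint gives disjointness. Propositions~\ref{prop:orbit} and~\ref{prop:orbit-components}, with transposition carrying $\mathcal O$ onto $\mathcal O^{\mathsf T}$, then supply the remaining properties, exactly as in the paper. Your extra remark that each orbit is open and closed in $\mathfrak X$ is a valid, slightly more elementary way to see nonsingularity once the decomposition is known. The phrase about ``scheme-free classical notions'' is muddled, though, and can simply be dropped.
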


\begin{proof}
By Theorem~\ref{thm:locus}, $\mathfrak X$ is the union of
$\mathcal O$ and $\mathcal O^{\mathsf T}$.  Proposition~\ref{prop:distinct} shows
that these are distinct orbits, hence disjoint.  Their remaining properties follow
from Propositions~\ref{prop:orbit} and \ref{prop:orbit-components}.
\end{proof}

\section*{Declaration of AI use}

This manuscript was developed with extensive use of OpenAI's ChatGPT. ChatGPT played a major role in the selection and refinement of the research problem, the iterative formulation of the main theorem, literature searches, exploration of proof strategies, development and assembly of the proof, verification of calculations, checking the applicability and hypotheses of cited results, verification of citations and bibliographic information, and the drafting of substantially the entire manuscript. The author's role consisted primarily of reviewing, correcting where necessary, and approving the final mathematical arguments and exposition. The author takes full responsibility for the correctness and contents of the final manuscript.

\end{document}